\documentclass[11pt,a4paper]{article}

\usepackage[T1]{fontenc}
\usepackage[utf8]{inputenc}

\usepackage{amsmath, amssymb, amsthm, amsfonts, mathrsfs, mathtools}

\usepackage[margin=1in]{geometry}

\usepackage{enumitem}
\usepackage{graphicx}
\usepackage{xcolor}
\usepackage[colorlinks=true,
            linkcolor=blue!50!black,
            citecolor=blue!50!black,
            urlcolor=blue!50!black]{hyperref}
\usepackage[capitalize]{cleveref}

\theoremstyle{plain}
\newtheorem{theorem}{Theorem}[section]
\newtheorem{lemma}[theorem]{Lemma}
\newtheorem{proposition}[theorem]{Proposition}
\newtheorem{corollary}[theorem]{Corollary}
\newtheorem{conjecture}[theorem]{Conjecture}

\theoremstyle{definition}
\newtheorem{definition}[theorem]{Definition}
\newtheorem{example}[theorem]{Example}
\newtheorem{remark}[theorem]{Remark}

\newtheorem{problem}[theorem]{Problem}

\crefname{theorem}{Theorem}{Theorems}
\crefname{lemma}{Lemma}{Lemmas}
\crefname{proposition}{Proposition}{Propositions}
\crefname{corollary}{Corollary}{Corollaries}
\crefname{definition}{Definition}{Definitions}
\crefname{remark}{Remark}{Remarks}

\newcommand{\Z}{\mathbb{Z}}
\newcommand{\N}{\mathbb{N}}

\newcommand{\E}{E}
\newcommand{\V}{V}

\DeclareMathOperator{\chia}{\chi}
\DeclareMathOperator{\chil}{\chi_\ell}
\newcommand{\chs}{\chi_{\ell}^{s}}
\newcommand{\Gs}{(G,\sigma)}
\newcommand{\Ns}[1]{N_{#1}}

\makeatletter
\newcommand{\runninghead}[1]{\markboth{#1}{#1}}
\newcommand{\affilnum}[1]{\textsuperscript{\textnormal{#1}}}
\newcommand{\@affiltext}{}
\newcommand{\@corrauthtext}{}
\newcommand{\@emailtext}{}
\newcommand{\affiliation}[1]{\renewcommand{\@affiltext}{#1}}
\newcommand{\corrauth}[1]{\renewcommand{\@corrauthtext}{#1}}
\providecommand{\email}{}
\renewcommand{\email}[1]{\renewcommand{\@emailtext}{#1}}

\renewcommand{\@maketitle}{%
  \newpage\null\vskip 2em%
  \begin{center}%
    {\LARGE \@title \par}%
    \vskip 1.5em%
    {\large
      \begin{tabular}[t]{c}\@author\end{tabular}\par}%
    \vskip 0.8em%
    {\small\@affiltext\par}%
    \vskip 0.5em%
    {\@date}%
  \end{center}%
  \vskip 1em%
  \noindent\textbf{Corresponding author:} \@corrauthtext\par%
  \smallskip%
  \noindent\textbf{Email:} \texttt{\@emailtext}\par%
  \medskip%
}
\makeatother

\title{\bfseries Every signed planar graph is $5$-choosable:\\
       A short proof and refinements}

\runninghead{EBODE ATANGANA and NDOGNKON MANGA}

\author{Pie D\'esir\'e EBODE ATANGANA\affilnum{1} and Maxwell NDOGNKON MANGA\affilnum{2}}

\affiliation{\affilnum{1}D\'epartement d'Informatique et Technologies \'Educatives, \'Ecole Normale Sup\'erieure, Laboratoire d'Informatique et Technologies \'Educatives (LITE), Yaound\'e, B.P.\ 47, Cameroun.\\
\affilnum{2}D\'epartement d'Informatique, Facult\'e des Sciences, Universit\'e de Yaound\'e 1, B.P.\ 812 Yaound\'e, Cameroun.}

\corrauth{Pie D\'esir\'e EBODE ATANGANA, D\'epartement d'Informatique et Technologies \'Educatives, \'Ecole Normale Sup\'erieure, Laboratoire d'Informatique et Technologies \'Educatives (LITE), Yaound\'e, B.P.\ 47, Cameroun.}

\email{desire.ebode@univ-yaounde1.cm}

\date{\today}

\begin{document}
\maketitle

\begin{abstract}
A \emph{signed graph} is a pair $\Gs$ in which $G$ is a finite simple
graph and $\sigma:\E(G)\to\{+1,-1\}$ is a \emph{signature}. Following
M\'a\v{c}ajov\'a--Raspaud--\v{S}koviera and Jin--Kang--Steffen, a
\emph{proper coloring} of $\Gs$ is a map $c:\V(G)\to\Z$ with
$c(u)\ne\sigma(uv)\,c(v)$ for every edge $uv$, and $\Gs$ is
\emph{signed $k$-choosable} if such a coloring exists from any list
assignment $L$ with $|L(v)|\ge k$. In a celebrated two-page note,
Thomassen proved that every planar graph is $5$-choosable, and Jin,
Kang, and Steffen subsequently extended this to signed planar graphs.
Our principal contribution is a short, self-contained, and
\emph{signature-blind} proof of the latter: the inductive bookkeeping
inserts one factor of $\sigma(\cdot)$ uniformly into every
constraint, so that with $\sigma\equiv +1$ the argument reduces
verbatim to Thomassen's original. From the strengthened extension
statement (\cref{thm:main}) we deduce the main result
(\cref{thm:JKS}: $\chs\Gs\le 5$ for every planar signed graph), the
M\'a\v{c}ajov\'a--Raspaud--\v{S}koviera signed Five-Color Theorem in
the symmetric palette $\Ns{2}=\{-2,-1,0,1,2\}$, the Switching
Invariance Lemma, $3$-choosability of outerplanar signed graphs,
$1$-defective signed $4$-choosability of planar signed graphs, a
sandwich inequality relating $\chs$ to the unsigned and ``positive''
choice numbers, and a polynomial-time list-coloring algorithm.
Voigt's planar non-$4$-choosable graph and Mirzakhani's smaller
variant show the bound $5$ is best possible. We close with examples
illustrating that negative edges genuinely refine unsigned
phenomena, a comparison table situating our work in the literature,
and several open problems.

\medskip
\noindent\textbf{Keywords:} signed graph; list coloring; choosability;
planar graph; near-triangulation; switching; Thomassen's theorem.

\smallskip
\noindent\textbf{2020 Mathematics Subject Classification:}
05C15; 05C22; 05C10.
\end{abstract}


\section{Introduction}\label{sec:intro}

The chromatic number $\chia(G)$ of a graph $G$ is the smallest $k$
for which the vertices of $G$ admit a proper $k$-coloring. The
\emph{Four Color Theorem} of Appel and
Haken~\cite{AppelHaken1977,RobertsonSST1997} states $\chia(G)\le 4$
for every planar graph; its weaker but elementary predecessor, the
\emph{Five Color Theorem}, was already known to
Heawood~\cite{Heawood1890}. Standard references for this classical
material are \cite{BondyMurty,Diestel,JT}.

Building on this theory, one can ask what happens when colors are
not freely chosen but must come from prescribed lists. A
\emph{list assignment} for $G$ is a map $L$ associating to each
$v\in\V(G)$ a finite set $L(v)$ of admissible colors. The graph
$G$ is \emph{$L$-colorable} if it admits a proper coloring $c$ with
$c(v)\in L(v)$ for every $v$, and \emph{$k$-choosable} if it is
$L$-colorable for every list assignment with $|L(v)|\ge k$. The
smallest such $k$, denoted $\chil(G)$, is the \emph{list chromatic
number} (or \emph{choice number}). List coloring superficially
resembles ordinary coloring, but $\chil(G)$ can be strictly larger
than $\chia(G)$, so analogues of classical theorems do not come for
free.

The notion of choosability was introduced independently by
Vizing~\cite{Vizing1976} and by Erd\H{o}s, Rubin and
Taylor~\cite{ERT1979}. Both papers asked whether every planar graph
is $5$-choosable. Voigt~\cite{Voigt1993} showed that planar graphs
need not even be $4$-choosable: she exhibited a planar graph on
$238$ vertices and an associated list assignment of size $4$
admitting no proper coloring. A smaller witness on $63$ vertices was
later found by Mirzakhani~\cite{Mirzakhani1996}. The matching upper
bound was established by Thomassen~\cite{Thomassen1994}, who proved
that every planar graph is $5$-choosable. Thomassen's proof,
occupying less than a page, is now standard fare in graph theory
courses; it proceeds by induction on a strengthened statement
involving two adjacent precolored boundary vertices.

We turn now from ordinary graphs to signed graphs. A \emph{signed
graph} $\Gs$ is a pair where $G$ is a graph and
$\sigma:\E(G)\to\{+1,-1\}$ is a \emph{signature}; the notion was
introduced by Harary~\cite{Harary1953} in the context of social
psychology and developed systematically by
Zaslavsky~\cite{Zaslavsky1982,Zaslavsky1982b,Zaslavsky1982c}. Edges
$e$ with $\sigma(e)=+1$ are \emph{positive} and those with
$\sigma(e)=-1$ are \emph{negative}; the \emph{sign of a closed walk}
is the product of the signs of its edges, and $\Gs$ is
\emph{balanced} if every cycle is positive. \emph{Switching} at
$U\subseteq\V(G)$ negates the sign of every edge with exactly one
endpoint in $U$, defining an equivalence relation on signatures; by
Zaslavsky's theorem~\cite{Zaslavsky1982}, $\sigma$ is balanced iff
it is switching-equivalent to the all-positive signature. Beyond
their original psychological motivation, signed graphs arise in the
geometry of root systems and
arrangements~\cite{Zaslavsky1982,Zaslavsky1982b}, in flow
theory~\cite{Bouchet1983}, and in the chromatic theory developed by
Zaslavsky, M\'a\v{c}ajov\'a--Raspaud--\v{S}koviera, and others; for
recent surveys see~\cite{NaserasrSY2021,SchweserStiebitzToft2021}.

Several non-equivalent proposals for the chromatic theory of signed
graphs appear in the literature, and the choice of framework matters
for a list-coloring statement. Zaslavsky's original
definition~\cite{Zaslavsky1982,Zaslavsky1982a} uses ``signed colors''
from the set $\{0,\pm1,\dots,\pm k\}$ and treats the zero color
asymmetrically. M\'a\v{c}ajov\'a, Raspaud, and
\v{S}koviera~\cite{MRS2016} reformulated the basic notion in a way
that more naturally generalizes the chromatic number and aligns
with the homomorphism viewpoint of Naserasr, Rollov\'a, and
Sopena~\cite{NaserasrRS2015}. We adopt the definition that is
intrinsic to the integer arithmetic of the signature, formalized
as follows.

\begin{definition}[Coloring of signed graphs]\label{def:color}
A \emph{coloring} of a signed graph $\Gs$ is a map $c\colon\V(G)\to\Z$
with
\[
   c(u)\;\ne\;\sigma(uv)\,c(v)\qquad\text{for every edge } uv\in\E(G).
\]
Equivalently, on positive edges $uv$ one requires $c(u)\ne c(v)$,
while on negative edges $uv$ one requires $c(u)+c(v)\ne 0$.
\end{definition}

\begin{definition}[List coloring of signed graphs]\label{def:listcolor}
A \emph{list assignment} for $\Gs$ is a map $L\colon\V(G)\to 2^{\Z}$
assigning to each $v$ a finite set $L(v)\subseteq\Z$. An
\emph{$L$-coloring} is a coloring (in the sense of
\cref{def:color}) with $c(v)\in L(v)$ for every $v$. The signed
graph $\Gs$ is \emph{signed $k$-choosable} if it admits an
$L$-coloring whenever $|L(v)|\ge k$ for all $v$, and the
\emph{signed list chromatic number} (or \emph{signed choice number})
is
\[
   \chs\Gs \;:=\; \min\bigl\{ k\in\N : \Gs\text{ is signed }k\text{-choosable}\bigr\}.
\]
\end{definition}

Two consistency checks are in order. First, when $\sigma\equiv+1$,
the constraint $c(u)\ne\sigma(uv)c(v)$ becomes $c(u)\ne c(v)$, the
classical proper-coloring condition; hence $\chia(G,+1)=\chia(G)$
and $\chs(G,+1)=\chil(G)$, so the signed framework specializes
correctly to the unsigned one. Second, our framework is consistent
with the symmetric palette of M\'a\v{c}ajov\'a, Raspaud, and
\v{S}koviera~\cite{MRS2016}: those authors restrict $c$ to take
values in $\Ns{k}=\{-k,-k+1,\dots,k-1,k\}$ and define the
(ordinary) signed chromatic number $\chia\Gs$ to be the smallest
$2k+1$ admitting such a coloring, so that $\chia\Gs\le 2\chs\Gs+1$.
We do not require lists to be symmetric (closed under negation), in
contrast with some authors. This is the convention employed
in~\cite{JinKangSteffen2016} and is the most flexible from a
proof-theoretic standpoint, and the symmetric framework will be
recovered from our main result as a corollary
(\cref{cor:five-color}).

We can now state the principal results. Voigt's planar
non-$4$-choosable graph $G_0$~\cite{Voigt1993}, viewed as
$(G_0,+1)$, satisfies $\chs(G_0,+1)\ge 5$, so the signed list
chromatic number can reach $5$ already on planar signed graphs (see
also Mirzakhani~\cite{Mirzakhani1996}). The complementary upper
bound was established by Jin, Kang, and Steffen.

\begin{theorem}[Jin--Kang--Steffen \cite{JinKangSteffen2016}]\label{thm:JKS}
Every signed planar graph $\Gs$ satisfies $\chs\Gs\le 5$.
\end{theorem}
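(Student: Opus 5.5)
We follow Thomassen's inductive scheme, adapted so that every constraint carries one factor of $\sigma(\cdot)$. Adding edges only adds constraints, so it suffices to treat near-triangulations: plane graphs whose bounded faces are triangles, with outer cycle $C$. New edges may receive arbitrary signs. We then prove the following strengthened statement by induction on $|\V(G)|$.

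Let $\Gs$ be a signed near-triangulation with outer cycle $C=v_1v_2\cdots v_pv_1$. Suppose $v_1,v_2$ are precolored with $c(v_1)\ne\sigma(v_1v_2)c(v_2)$. Suppose further that $|L(v)|\ge 3$ for $v\in C\setminus\{v_1,v_2\}$ and $|L(v)|\ge5$ for interior $v$. Then the precoloring extends to an $L$-coloring. Since $|L(v)|\ge 5$ everywhere, we may fix any outer edge $v_1v_2$, pick colors $c(v_1)\in L(v_1)$ and $c(v_2)\in L(v_2)$ satisfying the single edge constraint, and apply the statement. This yields \cref{thm:JKS}.

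\textbf{Base case.} When $G$ is a triangle $v_1v_2v_3$, the vertex $v_3$ has at least $3$ colors and only two forbidden values, $\sigma(v_3v_1)c(v_1)$ and $\sigma(v_3v_2)c(v_2)$. So a color remains.

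\textbf{Chord case.} Suppose $C$ has a chord $v_iv_j$. It splits $G$ into two near-triangulations $G_1$ and $G_2$, with $G_1$ containing $v_1v_2$. First apply induction to $G_1$. The chord's endpoints then carry colors that satisfy the chord constraint $c(v_i)\ne\sigma(v_iv_j)c(v_j)$, since the chord is an edge of $G_1$. So they form a valid precolored outer edge of $G_2$, and we apply induction to $G_2$. Restricting the signature to each piece causes no difficulty.

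\textbf{No-chord case.} Let $v_p$ be the neighbor of $v_1$ on $C$, with further neighbors $v_1,u_1,\dots,u_m,v_{p-1}$ in cyclic order. Because $C$ is chordless and every bounded face is a triangle, $G-v_p$ is a near-triangulation with outer cycle $v_1\cdots v_{p-1}u_m\cdots u_1v_1$. The forbidden value $v_1$ imposes on $v_p$ is now the signed value $\sigma(v_pv_1)c(v_1)$, not $c(v_1)$ itself; this is the one place where the sign enters.

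Choose two distinct colors $x,y\in L(v_p)\setminus\{\sigma(v_pv_1)c(v_1)\}$, which is possible since $|L(v_p)|\ge3$. For each $u_k$, delete the values $\sigma(u_kv_p)x$ and $\sigma(u_kv_p)y$ from $L(u_k)$, leaving at least $3$ colors. The $u_k$ are now outer vertices of $G-v_p$, and they are distinct from $v_1,v_2$ (if $p=3$, then $v_{p-1}=v_2$ is still precolored, which is fine). Apply induction to $G-v_p$ with these reduced lists.

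Finally color $v_p$ with whichever of $x,y$ differs from $\sigma(v_pv_{p-1})c(v_{p-1})$. The constraint with $v_1$ holds by the choice of $x,y$. For each $u_k$, the rule $c(u_k)\ne\sigma(u_kv_p)z$ for $z\in\{x,y\}$ is equivalent to $c(v_p)\ne\sigma(v_pu_k)c(u_k)$, because $\sigma^2=1$ and $\sigma$ is symmetric in the edge.

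\textbf{Main obstacle.} The step needing care is this last one. We must check that each constraint, which is not symmetric in its colors (it reads $c(u)\ne\sigma c(v)$), is enforced from the correct side. The identity $a\ne\sigma b\iff b\ne\sigma a$ for $\sigma\in\{\pm1\}$ handles this. We also need the standard planarity facts, that $G-v_p$ is a near-triangulation and that its new outer vertices are exactly the $u_k$; these are independent of signs and carry over verbatim from Thomassen's proof.
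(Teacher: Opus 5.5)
Your proposal is correct and follows essentially the same route as the paper. You reduce to near-triangulations and prove the signed Thomassen extension statement (\cref{thm:main}) by induction. The base case, the chord split (coloring the piece containing $v_1v_2$ first), and the removal of $v_p$ using two reserve colors $x,y$ and deleting $\sigma(v_pu_k)x,\sigma(v_pu_k)y$ from the interior neighbors' lists all match the paper. Your identity $a\ne\sigma b\iff b\ne\sigma a$ for $\sigma\in\{\pm1\}$ is exactly the paper's observation that multiplication by $\sigma$ is an involution.
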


Our principal contribution is a short, self-contained proof of
\cref{thm:JKS} obtained by lifting Thomassen's argument almost
verbatim to the signed setting. The lift relies on the following
strengthening, which is the technical heart of the paper.

\begin{theorem}[Main Extension Theorem]\label{thm:main}
Let $\Gs$ be a signed near-triangulation with outer cycle
$C\colon v_1v_2\cdots v_pv_1$ ($p\ge 3$), and let $L$ be a list
assignment satisfying the following three hypotheses. First, the
two adjacent boundary vertices $v_1$ and $v_2$ are precolored with
values $c_1\in L(v_1)$ and $c_2\in L(v_2)$ in such a way that the
proper-coloring condition is already satisfied on the edge $v_1v_2$,
that is, $c_1\ne\sigma(v_1v_2)\,c_2$; we shall refer to this as
condition~(i). Second, the remaining boundary vertices each carry
a list of size at least three, so that $|L(v)|\ge 3$ for every
$v\in\V(C)\setminus\{v_1,v_2\}$, which we call condition~(ii).
Third, every interior vertex carries a list of size at least five,
so that $|L(v)|\ge 5$ for every $v\in\V(G)\setminus\V(C)$, which
we call condition~(iii). Then the precoloring of $\{v_1,v_2\}$
extends to a proper $L$-coloring of $\Gs$.
\end{theorem}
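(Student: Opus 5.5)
We argue by induction on $|\V(G)|$, following Thomassen's scheme and putting a single factor $\sigma(\cdot)$ into each forbidden-color computation. The base case $|\V(G)|=3$ is immediate. Here $G$ is the triangle $v_1v_2v_3$ and $|L(v_3)|\ge 3$. The vertex $v_3$ must avoid the two values $\sigma(v_1v_3)c_1$ and $\sigma(v_2v_3)c_2$, so at least one color remains. The key observation, used throughout, is that for a fixed edge $uv$ and a fixed color $c(u)$, the edge forbids exactly one color at $v$, namely $\sigma(uv)c(u)$. Thus every counting step of Thomassen's argument survives unchanged.

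\textbf{Case 1: $C$ has a chord $v_iv_j$.} The chord splits $G$ into two near-triangulations $G_1$ and $G_2$. Each has as outer cycle one of the two cycles formed by the chord and a boundary arc. Let $G_1$ be the one containing the edge $v_1v_2$. We apply the induction hypothesis to $(G_1,\sigma|_{G_1})$ with the same precoloring; hypotheses (i)--(iii) are inherited. This colors $v_i$ and $v_j$, and the chord $v_iv_j$ is already properly colored inside $G_1$, so condition (i) holds for it. We then apply the induction hypothesis to $(G_2,\sigma|_{G_2})$ with $v_i,v_j$ as the precolored adjacent pair. The remaining boundary vertices of $G_2$ lie on $C$ and keep lists of size at least $3$, and interior vertices keep lists of size at least $5$. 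The union of the two colorings is proper, because every edge of $G$ lies in $G_1$ or in $G_2$.

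\textbf{Case 2: $C$ is chordless.} Let $v_p$ be the boundary neighbour of $v_1$ other than $v_2$, and let $u_1,\dots,u_m$ be its neighbours in $\V(G)\setminus\V(C)$, listed in order between $v_1$ and $v_{p-1}$. Since $G$ is a near-triangulation and $C$ has no chord, $G'=G-v_p$ is a near-triangulation with outer cycle $v_1u_1\cdots u_m v_{p-1}\cdots v_2v_1$. Choose two distinct colors $x,y\in L(v_p)\setminus\{\sigma(v_pv_1)c_1\}$; this is possible because $|L(v_p)|\ge 3$. Define $L'(u_k)=L(u_k)\setminus\{\sigma(v_pu_k)x,\sigma(v_pu_k)y\}$. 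The two removed values are distinct because $x\ne y$, so $|L'(u_k)|\ge 3$. All other lists are unchanged. By induction, $(G',\sigma|_{G'})$ has a proper $L'$-coloring $c$ extending the precoloring.

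It remains to color $v_p$ with $x$ or $y$. The vertex $v_p$ has neighbours $v_1$, $v_{p-1}$, and the $u_k$. The color $c_1$ at $v_1$ was already avoided by the choice of $x$ and $y$. Each $u_k$ avoids both choices, since $c(u_k)\ne\sigma(v_pu_k)x$ is equivalent to $x\ne\sigma(v_pu_k)c(u_k)$ because $\sigma=\pm1$. The single vertex $v_{p-1}$ forbids at most one color, $\sigma(v_pv_{p-1})c(v_{p-1})$. So one of $x,y$ survives.

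The main obstacle is verifying that the inequality is symmetric under the sign. The identity $a\ne\sigma b\iff b\ne\sigma a$ holds because $\sigma^2=1$, and this is what makes the deleted values at the $u_k$ match the constraint checked at $v_p$. We also need the two deleted values to be distinct, which holds because multiplication by $\sigma$ is injective. The remaining steps are routine planar facts, unchanged from the unsigned proof: the chord decomposition, and the fact that $G'$ is a near-triangulation whose outer cycle contains all the $u_k$.
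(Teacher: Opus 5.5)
Your proposal is correct and follows the paper's proof essentially step for step. It uses the same induction on $|V(G)|$ and the same chord split: the side containing $v_1v_2$ is colored first, then the other side with the chord endpoints as the precolored pair. The chordless case is also the same, deleting $v_p$ with reserve colors $x,y\in L(v_p)\setminus\{\sigma(v_pv_1)c_1\}$ and removing $\sigma(v_pu_k)x,\sigma(v_pu_k)y$ from $L(u_k)$, justified by the symmetry $a\ne\sigma b\iff b\ne\sigma a$ that the paper also highlights.
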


The proof of \cref{thm:main} is given in~\cref{sec:proof}. With
$\sigma\equiv+1$ it specializes to Thomassen's original extension
lemma~\cite{Thomassen1994}, so our work yields a strict
generalization of one of the most elegant arguments in graph theory,
with no loss of brevity.

From this extension theorem we derive a number of consequences.
Most directly, applying \cref{thm:main} with the constant list
$L(v)=\Ns{2}=\{-2,-1,0,1,2\}$ recovers the Five Color Theorem in
the symmetric palette of M\'a\v{c}ajov\'a, Raspaud, and
\v{S}koviera~\cite{MRS2016}, which is an analogue of Heawood's
classical result for signed graphs (\cref{cor:five-color}).
Switching invariance of $\chs$, proved as \cref{lem:switch}, has
the immediate consequence that for balanced signed graphs the
signed list chromatic number coincides with the unsigned one
(\cref{prop:balanced}), so the new content of \cref{thm:JKS} is
concentrated in the unbalanced case where the choice of color $0$
enjoys no privileged role. The two-vertex precoloring extension
(\cref{cor:2vert}) refines \cref{thm:JKS} by allowing any two
adjacent vertices on a fixed face to be precolored. For outerplanar
signed graphs the bound improves to $3$ (\cref{cor:outerplanar}),
generalizing the analogous classical fact for unsigned outerplanar
graphs. When the maximum degree is at most $4$, a degeneracy
argument suffices for $5$-choosability (\cref{cor:subcubic}); for
higher degree, \cref{thm:JKS} appears to be essentially required.
Allowing one ``violating'' neighbor at each vertex  that is,
$1$-defective coloring  reduces the required list size to $4$
(\cref{prop:defective}). A sandwich inequality
$\chi_\ell^+\le\chs\le\chil$ relating the signed and unsigned list
chromatic numbers (\cref{prop:sandwich}) follows from definitions
and recovers the classical bound as an upper limit. Finally, the
proof of \cref{thm:main} is constructive and yields a
polynomial-time algorithm for list $5$-coloring planar signed
graphs (\cref{prop:algo}).

We further compare these results with the recent disproof of the
signed Four Color Theorem by Kard\v{o}s and
Narboni~\cite{KardosNarboni2021}, and with related extensions to
higher-genus surfaces, defective coloring, DP-coloring, and
homomorphism-based parameters.

The paper is organized as follows.
\Cref{sec:preliminaries} fixes notation and recalls foundational
facts about signed graphs, including Zaslavsky's switching theorem.
\Cref{sec:coloring} surveys the chromatic theory of signed graphs,
proves switching invariance of $\chia$ and $\chs$, and relates the
various coloring conventions in the literature.
\Cref{sec:proof} contains the proof of \cref{thm:main} and the
deduction of \cref{thm:JKS}. \Cref{sec:cor} collects the corollaries
listed above. \Cref{sec:examples} presents examples demonstrating
the tightness of the bound $5$ and the genuine role of negative
edges. \Cref{sec:literature} compares the results to the existing
literature in signed-graph chromatic theory. \Cref{sec:open} lists
open problems and concluding remarks.

\section{Preliminaries on signed graphs}\label{sec:preliminaries}

We follow the terminology of Zaslavsky~\cite{Zaslavsky1982} for
signed graphs and of Diestel~\cite{Diestel} for ordinary graph
theory; all graphs are finite and simple unless otherwise noted.
Throughout the paper $G$ denotes a finite simple graph with vertex
set $\V(G)$ and edge set $\E(G)$, and a signed graph is a pair
$\Gs$ with $\sigma:\E(G)\to\{+1,-1\}$. The \emph{underlying graph}
of $\Gs$ is $G$, and we say $\Gs$ is planar (resp.\ outerplanar,
$k$-connected) whenever $G$ is.

The fundamental operation on signatures is switching, which
encodes a natural equivalence between signatures of the same
underlying graph.

\begin{definition}[Switching]\label{def:switching}
For a signed graph $\Gs$ and $U\subseteq\V(G)$, the \emph{switch} of
$\sigma$ at $U$ is the signature $\sigma^U:\E(G)\to\{+1,-1\}$
defined by
\[
   \sigma^U(uv) \;=\;
   \begin{cases}
   -\sigma(uv) & \text{if } |\{u,v\}\cap U|=1,\\
   \phantom{-}\sigma(uv) & \text{otherwise.}
   \end{cases}
\]
We write $\sigma\sim\sigma'$ if $\sigma'=\sigma^U$ for some
$U\subseteq\V(G)$, and call $\sigma,\sigma'$
\emph{switching-equivalent}.
\end{definition}

It is straightforward to check that $\sim$ is an equivalence
relation on the set of signatures of $G$. With switching defined,
the next concept we need is the sign of a walk and the resulting
notion of balance.

\begin{definition}[Sign of a walk; balance]\label{def:sign}
The \emph{sign} of a walk $W=v_0e_1v_1\cdots e_\ell v_\ell$ in $\Gs$
is $\sigma(W):=\prod_{i=1}^{\ell}\sigma(e_i)$. A cycle $C$ is
\emph{positive} if $\sigma(C)=+1$ and \emph{negative} otherwise.
The signed graph $\Gs$ is \emph{balanced} if every cycle of $G$ is
positive, and \emph{antibalanced} if $(G,-\sigma)$ is balanced
(equivalently, every even cycle is positive and every odd cycle is
negative).
\end{definition}

The first crucial observation is that the signs of cycles are
preserved by switching, so balance is a switching invariant.

\begin{lemma}[Cycle signs are switching invariants]\label{lem:cycle-invariant}
For every cycle $C$ of $G$ and every $U\subseteq\V(G)$,
$\sigma^U(C)=\sigma(C)$.
\end{lemma}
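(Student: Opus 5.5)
The plan is to compute the effect of switching edge by edge and then multiply along the cycle. Write $C=v_0e_1v_1\cdots e_\ell v_\ell$ with $v_\ell=v_0$ and $e_i=v_{i-1}v_i$. Let $\mathbf{1}_U\colon \V(G)\to\{0,1\}$ be the indicator function of $U$, and set $s(v):=(-1)^{\mathbf{1}_U(v)}$, so that $s(v)=-1$ exactly when $v\in U$. The first step is to restate \cref{def:switching} as the identity
\[
   \sigma^U(uv)=s(u)\,\sigma(uv)\,s(v)\qquad\text{for every edge }uv\in\E(G).
\]
To check this, note that $s(u)s(v)=-1$ precisely when exactly one of $u,v$ lies in $U$. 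This is exactly the case in which \cref{def:switching} negates $\sigma(uv)$. In the two other cases, with both endpoints or neither in $U$, we have $s(u)s(v)=+1$ and the sign is unchanged.

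Next, take the product over the edges of $C$:
\[
   \sigma^U(C)=\prod_{i=1}^{\ell}s(v_{i-1})\,\sigma(e_i)\,s(v_i)
   =\sigma(C)\prod_{i=1}^{\ell}s(v_{i-1})s(v_i).
\]
On a closed walk each vertex occurrence $v_i$ is the end of edge $e_i$ and the start of edge $e_{i+1}$, with indices taken mod $\ell$. So each factor $s(v_i)$ appears exactly twice in the last product. Since $s(v)^2=1$, that product equals $1$, and hence $\sigma^U(C)=\sigma(C)$. The same computation applies to any closed walk, because only closedness is used.

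No step here is a real obstacle. The only point that needs care is the indexing that pairs up the endpoint factors, namely the use of $v_\ell=v_0$ to close the telescoping. An alternative counting argument reaches the same conclusion: the number of edges of $C$ with exactly one endpoint in $U$ is even, since traversing $C$ crosses between $U$ and its complement an even number of times. This argument is equivalent, but the multiplicative form above is cleaner.
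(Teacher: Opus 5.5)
Your proof is correct and is essentially the paper's argument. The paper notes that switching flips exactly the edges with one endpoint in $U$, and that a traversal of $C$ crosses between $U$ and its complement an even number of times; your telescoping product $\prod_{i=1}^{\ell} s(v_{i-1})s(v_i)=s(v_0)s(v_\ell)=1$, built on the identity $\sigma^U(uv)=s(u)\,\sigma(uv)\,s(v)$, is simply the multiplicative form of that parity claim. It makes the parity step explicit, and as you observe it applies equally to closed walks.
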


\begin{proof}
A switch at $U$ negates an edge $uv$ iff exactly one endpoint lies
in $U$. Walking around $C$, we cross the boundary of $U$ in $\V(G)$
an even number of times, so an even number of signs are flipped
along $C$, and the product is unchanged.
\end{proof}

The fundamental theorem of Zaslavsky on signed graphs identifies
the balanced signatures up to switching, and gives the converse to
the easy direction supplied by the previous lemma.

\begin{theorem}[Zaslavsky's Balance Theorem~\cite{Zaslavsky1982}]\label{thm:zaslavsky-balance}
A signed graph $\Gs$ is balanced if and only if $\sigma$ is
switching-equivalent to the all-positive signature.
\end{theorem}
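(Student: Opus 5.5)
The statement to prove is Zaslavsky's Balance Theorem (\cref{thm:zaslavsky-balance}). The plan is to handle each direction separately, with the converse reduced to one connected component at a time.

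\emph{Easy direction.} Suppose $\sigma=(+1)^U$ for some $U\subseteq\V(G)$. By \cref{lem:cycle-invariant}, every cycle $C$ satisfies $\sigma(C)=(+1)(C)=+1$, so $\Gs$ is balanced.

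\emph{Converse.} Assume every cycle of $\Gs$ is positive. Switchings on different components act independently, so it suffices to treat a connected $G$ and take the union of the switching sets found componentwise.

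Fix a spanning tree $T$ of $G$ and a root $r$. For each vertex $v$, let $P_v$ be the unique $r$--$v$ path in $T$, and set
\[
U=\{v\in\V(G):\sigma(P_v)=-1\}.
\]
We claim that $\sigma^U\equiv+1$. The quantity $\sigma^U(uv)$ equals $\sigma(uv)$ multiplied by $-1$ exactly when precisely one of $u,v$ lies in $U$. Hence, in both cases,
\[
\sigma^U(uv)=\sigma(P_u)\,\sigma(uv)\,\sigma(P_v).
\]
We now check that this product is $+1$ for every edge.

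\emph{Tree edges.} If $uv\in\E(T)$, then one of $P_u,P_v$ extends the other by the edge $uv$. Each edge sign is $\pm1$, so $\sigma(uv)^2=1$ and the product is $+1$.

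\emph{Non-tree edges.} If $uv\notin\E(T)$, the closed walk $P_u\,uv\,P_v^{-1}$ is the fundamental cycle of $uv$ with respect to $T$, together with a common initial segment of $P_u$ and $P_v$ traversed twice. That segment contributes sign $+1$. Hence $\sigma^U(uv)$ equals the sign of the fundamental cycle, which is $+1$ by balance.

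Thus $\sigma^U$ is all-positive. Since switching is an involution ($(\sigma^U)^U=\sigma$), it follows that $\sigma=(+1)^U$, so $\sigma\sim+1$.

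The only delicate step is identifying the sign of the closed walk with the sign of the fundamental cycle. This rests on the observation that any edge traversed twice contributes a square, which equals $+1$.
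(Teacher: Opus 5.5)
Your proof is correct and takes the same route as the paper. The easy direction uses \cref{lem:cycle-invariant}. For the converse, you take a rooted spanning tree, switch at the set $U$ of vertices whose tree path from the root is negative, and use positivity of fundamental cycles on the non-tree edges; you also spell out what the paper calls a ``direct calculation'' (the tree-edge case and the cancellation of the doubly traversed common segment).
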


\begin{proof}
If $\sigma\sim\sigma_+$ where $\sigma_+\equiv+1$, then by
\cref{lem:cycle-invariant} every cycle has sign
$\sigma(C)=\sigma_+(C)=+1$, so $\Gs$ is balanced. Conversely,
suppose $\Gs$ is balanced; without loss of generality $G$ is
connected. Choose a spanning tree $T$ of $G$ and a root $r\in\V(G)$.
For $v\in\V(G)$, let $f(v)=+1$ if the unique $r$--$v$ path in $T$
has an even number of negative edges, and $-1$ otherwise. Set
$U=\{v:f(v)=-1\}$. A direct calculation, using the assumption that
every cycle of $G$ is positive (so every non-tree edge $uv$ closes
a positive cycle, forcing $\sigma(uv)=f(u)f(v)$), shows
$\sigma^U\equiv+1$.
\end{proof}

For reference, we record the equivalent classical formulation due
to Harary, which makes balance into a structural property of the
underlying bipartition.

\begin{theorem}[Harary's Balance Theorem~\cite{Harary1953}]\label{thm:harary}
A signed graph $\Gs$ is balanced if and only if there is a
bipartition $\V(G)=V_1\sqcup V_2$ such that $\sigma(uv)=+1$ if and
only if $u$ and $v$ lie on the same side.
\end{theorem}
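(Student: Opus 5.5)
The plan is to deduce Harary's theorem from \cref{thm:zaslavsky-balance}, translating switching sets into bipartitions and back.

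For the direction ($\Leftarrow$), suppose a bipartition $\V(G)=V_1\sqcup V_2$ is given with $\sigma(uv)=+1$ exactly when $u,v$ lie on the same side. Set $U=V_2$. An edge $uv$ has exactly one endpoint in $U$ if and only if it crosses the bipartition, that is, if and only if $\sigma(uv)=-1$. Hence switching at $U$ negates precisely the negative edges, so $\sigma^U\equiv+1$ and $\sigma\sim\sigma_+$. \Cref{thm:zaslavsky-balance} then shows that $\Gs$ is balanced. Alternatively, one can argue directly: any cycle crosses between $V_1$ and $V_2$ an even number of times, so it contains an even number of negative edges.

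For the direction ($\Rightarrow$), suppose $\Gs$ is balanced. By \cref{thm:zaslavsky-balance} there is a $U\subseteq\V(G)$ with $\sigma^U\equiv+1$. Take $V_1=\V(G)\setminus U$ and $V_2=U$. For each edge $uv$ we have $\sigma^U(uv)=+1$. By \cref{def:switching}, this gives $\sigma(uv)=+1$ when $|\{u,v\}\cap U|\ne1$, i.e.\ when both ends lie on the same side, and $\sigma(uv)=-1$ when the edge crosses. This is exactly the required bipartition. We allow one side to be empty, which covers the all-positive case. Disconnected graphs need no separate treatment, since \cref{thm:zaslavsky-balance} already covers them componentwise.

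Neither direction has a real obstacle; the only care needed is bookkeeping. The condition $|\{u,v\}\cap U|=1$ must be matched with "opposite sides", and the statement must permit an empty part.
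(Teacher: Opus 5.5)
Your argument is correct and matches the paper's intent: the paper gives no separate proof, recording \cref{thm:harary} only as the equivalent classical form of \cref{thm:zaslavsky-balance}, and your dictionary $V_2=U$, $V_1=\V(G)\setminus U$ is exactly that equivalence (indeed the set $U=\{v: f(v)=-1\}$ built in the proof of \cref{thm:zaslavsky-balance} is already one side of the required bipartition). Your remark that an empty side must be permitted is also correct and necessary, since a connected all-positive graph on at least two vertices has no bipartition with both sides nonempty.
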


To anchor these abstract notions, we record three concrete examples
that will recur in the sequel.

\begin{example}[All-positive and all-negative]\label{ex:plus-minus}
The all-positive signed graph $(G,+1)$ recovers ordinary graph
theory. The all-negative signed graph $(G,-1)$ is balanced if and
only if $G$ is bipartite, by~\cref{thm:zaslavsky-balance}: in a
bipartition $(A,B)$, switching at $A$ turns every edge positive.
\end{example}

\begin{example}[Signed $K_4$]\label{ex:K4-pre}
Let $G=K_4$ with $\sigma$ assigning $-1$ to a perfect matching and
$+1$ to the remaining four edges. Each triangle of $K_4$ contains
exactly one negative edge, so every triangle is negative, and $\Gs$
is not balanced. Switching at any vertex changes the sign of three
edges, so we cannot reduce to $\sigma\equiv+1$, consistent
with~\cref{thm:zaslavsky-balance}.
\end{example}

\begin{example}[Near-triangulations]\label{ex:near-tri}
A \emph{near-triangulation} is a $2$-connected plane graph in which
every bounded face is a triangle. These play a central role
in~\cite{Thomassen1994} and in the proof of \cref{thm:main}: every
planar graph can be augmented to a near-triangulation by adding
edges, and adding edges only increases coloring constraints.
\end{example}

For the inductive arguments to follow we adopt the following
notation throughout. Let $\Gs$ be a near-triangulation with outer
cycle $C\colon v_1\cdots v_pv_1$. Indices on $C$ are read modulo
$p$, so $v_{p+1}=v_1$. A \emph{chord} of $C$ is an edge
$v_iv_j\in\E(G)$ with $|i-j|\notin\{1,p-1\}$, and we write $N(u)$
for the open neighborhood of $u$ in $G$.

\section{The coloring of signed graphs}\label{sec:coloring}

A defining feature of signed-graph coloring theory is that
switching is a chromatic invariance principle: not only does it
preserve balance, it is also compatible with proper coloring
in a way that descends to list coloring. The key ingredient is the
following bijection between $L$-colorings before and after a
switch.

\begin{lemma}[Switching invariance lemma]\label{lem:switch}
Let $\Gs$ be a signed graph and $U\subseteq\V(G)$. For any list
assignment $L$ define $L^U$ on $\V(G)$ by
\[
L^U(v) =
\begin{cases}
\{-x : x\in L(v)\} & \text{if } v\in U,\\
L(v) & \text{otherwise,}
\end{cases}
\]
so that $|L^U(v)|=|L(v)|$ for every $v$. Then the map
\[
   c\;\longmapsto\;c',\qquad
   c'(v)=\begin{cases}-c(v)&v\in U,\\\phantom{-}c(v)&v\notin U,\end{cases}
\]
is a bijection between proper $L$-colorings of $\Gs$ and proper
$L^U$-colorings of $(G,\sigma^U)$. In particular,
\[
   \chs\Gs \;=\; \chs(G,\sigma^U).
\]
\end{lemma}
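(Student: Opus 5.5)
The plan is a direct verification with three parts: the map respects lists, it respects the edge constraints, and it is invertible. Write $\varepsilon(v)=-1$ if $v\in U$ and $\varepsilon(v)=+1$ otherwise. Then the map reads $c'(v)=\varepsilon(v)c(v)$, and \cref{def:switching} can be restated uniformly as $\sigma^U(uv)=\varepsilon(u)\varepsilon(v)\sigma(uv)$. Indeed, the product $\varepsilon(u)\varepsilon(v)$ is $-1$ exactly when $|\{u,v\}\cap U|=1$. Rewriting everything in terms of $\varepsilon$ avoids case splits on which endpoints lie in $U$.

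First, list membership. By construction $L^U(v)=\{\varepsilon(v)x : x\in L(v)\}$, so $c(v)\in L(v)$ if and only if $c'(v)\in L^U(v)$. Since negation is injective, $|L^U(v)|=|L(v)|$.

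Second, the edge condition, which is the only step with any content. For an edge $uv$,
\[
c'(u)-\sigma^U(uv)\,c'(v)=\varepsilon(u)c(u)-\varepsilon(u)\varepsilon(v)\sigma(uv)\,\varepsilon(v)c(v)=\varepsilon(u)\bigl(c(u)-\sigma(uv)c(v)\bigr),
\]
using $\varepsilon(v)^2=1$. Because $\varepsilon(u)\neq0$, we get $c'(u)\ne\sigma^U(uv)c'(v)$ if and only if $c(u)\ne\sigma(uv)c(v)$. Hence $c$ is a proper $L$-coloring of $\Gs$ exactly when $c'$ is a proper $L^U$-coloring of $(G,\sigma^U)$.

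Third, bijectivity. The map $c\mapsto c'$ is an involution on $\Z^{\V(G)}$. Moreover, $(\sigma^U)^U=\sigma$ and $(L^U)^U=L$. Applying the same construction from $(G,\sigma^U),L^U$ therefore yields the inverse map, which by the second step again sends proper colorings to proper colorings.

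Finally, the equality of choice numbers. Suppose $\Gs$ is signed $k$-choosable, and let $M$ be any list assignment with $|M(v)|\ge k$. Then $M^U$ also has lists of size at least $k$, so $\Gs$ has an $M^U$-coloring. Since $(M^U)^U=M$ and $(\sigma^U)^U=\sigma$, the bijection applied to $\Gs$ with lists $M^U$ turns that coloring into an $M$-coloring of $(G,\sigma^U)$. This gives $\chs(G,\sigma^U)\le\chs\Gs$. The reverse inequality follows by symmetry, applying the same argument to $\sigma^U$ in place of $\sigma$. No step is a real obstacle; the only care needed is the factorization in the second step, which the $\varepsilon$ notation makes a single line.
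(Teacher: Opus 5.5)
Your proof is correct and follows the paper's route of directly checking that the map respects lists and edge constraints. Your sign function $\varepsilon$ merges the paper's two cases into the single identity $c'(u)-\sigma^U(uv)c'(v)=\varepsilon(u)\bigl(c(u)-\sigma(uv)c(v)\bigr)$, and your use of $(\sigma^U)^U=\sigma$ and $(L^U)^U=L$ spells out the bijectivity and the equality of choice numbers, which the paper only asserts briefly.
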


\begin{proof}
By construction $c(v)\in L(v) \Leftrightarrow c'(v)\in L^U(v)$, so
it suffices to verify the proper-coloring condition. Let
$uv\in\E(G)$. We treat two cases. In the first case,
$\{u,v\}\cap U\in\{\emptyset,\{u,v\}\}$; then
$\sigma^U(uv)=\sigma(uv)$, and $c'(u),c'(v)$ are obtained from
$c(u),c(v)$ by a common sign change (or none), so
\[
   c'(u)=\sigma^U(uv)\,c'(v)\;\Longleftrightarrow\;
   c(u)=\sigma(uv)\,c(v).
\]
In the second case, exactly one of $u,v$ lies in $U$, say $u\in U$
and $v\notin U$; then $\sigma^U(uv)=-\sigma(uv)$, $c'(u)=-c(u)$,
and $c'(v)=c(v)$, whence
\[
   c'(u)=\sigma^U(uv)\,c'(v)
   \Leftrightarrow -c(u)=-\sigma(uv)\,c(v)
   \Leftrightarrow c(u)=\sigma(uv)\,c(v).
\]
In both cases the proper-coloring conditions transform consistently,
so the displayed map is a bijection. The chromatic identity follows
since the construction $L\mapsto L^U$ preserves list sizes.
\end{proof}

A useful consequence of \cref{lem:switch} is that, when proving an
upper bound on $\chs$, one is free to switch the signature in any
convenient way. We emphasize, however, that the proof of
\cref{thm:main} in~\cref{sec:proof} is \emph{signature-blind}: it
does not need to switch during the induction.

The Switching Invariance Lemma combines with Zaslavsky's Balance
Theorem to reduce the entire balanced case to the unsigned setting.

\begin{proposition}[Balanced reduction]\label{prop:balanced}
If $\Gs$ is balanced, then $\chs\Gs=\chil(G)$. In particular, if
$G$ is planar and $\sigma$ is balanced, $\chs\Gs\le 5$.
\end{proposition}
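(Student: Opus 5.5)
The plan is to chain \cref{thm:zaslavsky-balance} with \cref{lem:switch}. Assume $\Gs$ is balanced. By \cref{thm:zaslavsky-balance} there is a set $U\subseteq\V(G)$ with $\sigma^U\equiv+1$. \Cref{lem:switch} then gives
\[
\chs\Gs=\chs(G,\sigma^U)=\chs(G,+1).
\]
The consistency check after \cref{def:listcolor} says that when $\sigma\equiv+1$ the condition $c(u)\ne\sigma(uv)c(v)$ is exactly $c(u)\ne c(v)$. Hence $L$-colorings of $(G,+1)$ are precisely the classical proper $L$-colorings of $G$, for every list assignment $L$, so $\chs(G,+1)=\chil(G)$. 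This proves the first assertion.

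We should make sure the equality in \cref{lem:switch} really covers every list assignment, not just one. Suppose $(G,\sigma^U)$ is $k$-choosable and let $L$ be any $k$-list assignment for $\Gs$. Then $L^U$ is a $k$-list assignment, so $(G,\sigma^U)$ has an $L^U$-coloring. The bijection of \cref{lem:switch} pulls this back to an $L$-coloring of $\Gs$. For the reverse direction, use the fact that switching at $U$ is an involution: $(\sigma^U)^U=\sigma$ and $(L^U)^U=L$. So the two choice numbers agree, and this is all already contained in the lemma's statement.

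For the planar clause, combine the first part with Thomassen's theorem~\cite{Thomassen1994}: $\chil(G)\le5$ for every planar $G$, so $\chs\Gs\le5$. To avoid citing Thomassen directly, one could instead apply \cref{thm:JKS} to $(G,+1)$, or apply \cref{thm:main} to $(G,+1)$ after triangulating and precoloring an outer edge. Either route works; the cleanest is to cite Thomassen, since \cref{thm:main} reduces to it when $\sigma\equiv+1$. No step is a real obstacle. The only point needing care is the bookkeeping just described, namely that $L\mapsto L^U$ is a size-preserving bijection on list assignments, so choosability transfers in both directions.
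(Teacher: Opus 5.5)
Your proposal is correct and follows the paper's proof: Zaslavsky's Balance Theorem gives $\sigma\sim+1$, \cref{lem:switch} transfers $\chs$, the specialization $\chs(G,+1)=\chil(G)$ identifies it with the unsigned choice number, and Thomassen's theorem gives the planar bound. Your extra check that $L\mapsto L^U$ is a size-preserving involution on list assignments spells out what the paper compresses into the last line of the proof of \cref{lem:switch}.
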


\begin{proof}
By Zaslavsky's theorem (\cref{thm:zaslavsky-balance}),
$\sigma\sim+1$, and \cref{lem:switch} together with the
specialization $\chs(G,+1)=\chil(G)$ yields the chromatic identity.
The planar bound is then Thomassen's
theorem~\cite{Thomassen1994}.
\end{proof}

Thus the new content of \cref{thm:JKS} is concentrated in the
unbalanced case, where the constraint on negative edges is genuinely
different from the constraint on positive edges and the choice of
color $0$ enjoys no privileged role.

Having established switching invariance, we record briefly how our
coloring framework relates to alternatives in the literature. The
original proposal of Zaslavsky~\cite{Zaslavsky1982,Zaslavsky1982a}
takes a coloring to be a map $c:\V(G)\to\{0,\pm1,\dots,\pm k\}$
with $c(u)\ne\sigma(uv)c(v)$, which is the same constraint as in
\cref{def:color} but restricted to a finite symmetric palette;
M\'a\v{c}ajov\'a, Raspaud, and \v{S}koviera~\cite{MRS2016}
sharpened this by defining a \emph{proper $(2k+1)$-coloring} as a
map $c:\V(G)\to N_k=\{-k,\dots,k\}$ satisfying~\cref{def:color},
with $\chia\Gs$ the smallest $2k+1$ for which such a coloring
exists, a convention that is well-suited to the symmetric palette
and specializes to $\chia(G)$ when $\sigma\equiv+1$.

A different perspective comes from the homomorphism viewpoint
introduced by Naserasr, Rollov\'a, and Sopena~\cite{NaserasrRS2015},
in which the signed chromatic number is the smallest order of a
target signed graph $(H,\pi)$ admitting a sign-preserving
homomorphism $\Gs\to(H,\pi)$; this perspective is particularly
fruitful for minor-closed families of signed graphs. The
list-coloring framework of \cref{def:listcolor}, employed in the
present paper, is due to Jin, Kang, and
Steffen~\cite{JinKangSteffen2016} and Kang and
Steffen~\cite{KangSteffen2018}, and works directly with arbitrary
finite subsets of $\Z$. Beyond list coloring lies the signed
DP-coloring of Jin, Kang, and Steffen~\cite{JinKangSteffen2018},
which generalizes the DP-coloring of Dvo\v{r}\'ak and
Postle~\cite{DvorakPostle2018} to the signed setting; the
corresponding constant for planar signed graphs is open and is
discussed in~\cref{conj:DP}. For each of these conventions, the
underlying coloring is invariant under switching in the appropriate
sense, and reduces to the classical notion when $\sigma\equiv+1$.

\section{Proof of the main theorem}\label{sec:proof}

We prove \cref{thm:main} by induction on $|\V(G)|$. The argument
mirrors that of Thomassen~\cite{Thomassen1994}; the only modification
is the bookkeeping of edge signs, which is uniform throughout. We
first record the standard reduction to near-triangulations, which
allows us to add edges freely without changing the question.

\begin{lemma}[Reduction to near-triangulations]\label{lem:reduce-tri}
Let $G$ be a planar graph with $|\V(G)|\ge 3$. There exists a planar
near-triangulation $\widetilde G$ with $\V(G)=\V(\widetilde G)$ and
$\E(G)\subseteq \E(\widetilde G)$ such that the outer face of
$\widetilde G$ is a triangle.
\end{lemma}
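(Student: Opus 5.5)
The plan has two stages: first make $G$ connected, then add edges until every face, including the outer one, is a triangle. The outer face being a triangle is handled simply by choosing the embedding so that one of the triangular faces is the unbounded one. Since $\E(G)\subseteq\E(\widetilde G)$ is all that is required, we may add edges freely. The only care needed is to keep the graph simple and planar.

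\textbf{Step 1 (connectivity).} Fix a plane embedding of $G$. If $G$ is disconnected, pick two components that lie on a common face. Join a vertex of one to a vertex of the other by an edge drawn inside that face. This keeps the graph simple and planar and lowers the number of components. Repeat until $G$ is connected.

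\textbf{Step 2 (maximality).} Among all simple plane graphs on $\V(G)$ that contain the current edge set, choose one with the maximum number of edges. This exists because there are finitely many graphs on a fixed vertex set, and it is at most $3n-6$ edges when $n\ge 3$. Call it $\widetilde G$. The key claim is that a maximal planar simple graph on $n\ge 3$ vertices is a triangulation, meaning every face, including the unbounded one, is bounded by a triangle.

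To prove the claim, suppose some face $f$ has a facial walk that is not a $3$-cycle. Since $\widetilde G$ is connected with $n\ge 3$ vertices, $f$ has at least three distinct vertices on its boundary.
\begin{itemize}
\item If two vertices of $f$ that are non-consecutive on the boundary are nonadjacent, add the edge between them inside $f$. This contradicts maximality.
\item Otherwise, take a facial cycle $x_1x_2x_3x_4\cdots$ of length at least $4$. Then $x_1x_3$ is already an edge, drawn outside $f$. By the Jordan curve theorem it separates $x_2$ from $x_4$ outside $f$, so $x_2x_4$ is not an edge. Adding $x_2x_4$ inside $f$ then contradicts maximality.
\item If the boundary walk of $f$ repeats vertices, because of a cut vertex or a bridge, there are two boundary neighbours of the repeated vertex lying in different blocks. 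These cannot be adjacent, since adjacency would put them in the same block, so an edge between them can be added inside $f$. Again this contradicts maximality.
\end{itemize}

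\textbf{Step 3 (conclusion).} A triangulation on $n\ge 3$ vertices is $2$-connected. Indeed, each face is a triangle, so the link of any vertex is a cycle and removing one vertex cannot disconnect the graph. Hence every bounded face is a triangle, so $\widetilde G$ is a near-triangulation. Finally, re-embed $\widetilde G$ by stereographic projection so that any chosen triangular face becomes the outer face. Then the outer face is a triangle.

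The main obstacle is the degenerate-boundary case in Step 2, where the facial walk repeats vertices. I would sidestep it by doing Step 1 more aggressively: first add edges until the graph is $2$-connected, for instance by joining consecutive blocks around each cut vertex within a face. Once the graph is $2$-connected, every face is bounded by a cycle, and only the first two cases of the claim remain.
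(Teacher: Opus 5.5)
Your proof is correct, but it takes a different route from the paper.

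\textbf{The paper's route.} It works inside a fixed embedding of $G$ and proceeds iteratively. While some bounded face has a boundary walk of length greater than $3$, it inserts a chord between two non-adjacent boundary vertices. At the end it adds chords across the outer face until that face is a triangle.

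\textbf{Your route.} You argue extremally. You pass to an edge-maximum planar supergraph $\widetilde G$ and show that every face of it, the unbounded one included, is a $3$-cycle. You then get $2$-connectivity from the fact that the rotation neighbours of each vertex form a closed walk in $\widetilde G - v$.

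\textbf{Comparison.} The paper's version is shorter and visibly constructive, which suits its later algorithmic proposition. However, it takes for granted exactly the points you prove:
\begin{itemize}
\item a face of length greater than $3$ has two non-adjacent boundary vertices, which is your Jordan-curve argument with $x_1x_3$ and $x_2x_4$ together with the cut-vertex case;
\item the result is $2$-connected, as the paper's definition of near-triangulation requires, although disconnected inputs and cut vertices are never discussed;
\item the chords across the outer face can be chosen so that the new bounded faces are again triangles.
\end{itemize}
Your extremal formulation settles all three at once, and the outer face comes out triangular for free. Nothing is lost in constructivity, since each of your three cases names an explicit edge to insert, so repeating those insertions gives an algorithm.

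\textbf{Minor redundancies.} Two steps you add are not needed. The stereographic re-embedding in Step 3 is unnecessary, because the outer face is already a triangle. The $2$-connected preprocessing suggested in your last paragraph is also unnecessary, because your third case already handles faces with repeated vertices correctly. There, consecutive walk neighbours $a,v,b$ with $av$ and $vb$ in different blocks cannot be adjacent, since $avb$ would then be a triangle lying in a single block.
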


\begin{proof}
Embed $G$ in the plane. While some bounded face is bounded by a
walk of length $>3$ around at least three distinct vertices, choose
two non-adjacent vertices $u,v$ on its boundary and add the chord
$uv$ through that face; the resulting graph is still planar.
Iterating gives a near-triangulation. If the outer face is bounded
by $\ell\ge 4$ edges, add chords across it until the outer face is
a triangle.
\end{proof}

\begin{remark}\label{rmk:edges-only-help}
For both $\chia$ and $\chs$ of signed graphs, adding an edge
(positive or negative) can only \emph{increase} the difficulty of
coloring: an $L$-coloring of the larger graph restricts to one of
the smaller. Hence to bound $\chs\Gs$ from above we may freely
augment $G$ to a near-triangulation, with arbitrary signs on the
new edges.
\end{remark}

We turn now to the proof of the main extension theorem.

\begin{proof}[Proof of \cref{thm:main}]
We use induction on $n:=|\V(G)|$.

In the base case, $n=3$, so $G=C=v_1v_2v_3$. Vertices $v_1,v_2$ are
precolored consistently with~(i), and the constraints on $c(v_3)$
are
\[
c(v_3)\;\ne\;\sigma(v_1v_3)\,c_1
\quad\text{and}\quad
c(v_3)\;\ne\;\sigma(v_2v_3)\,c_2,
\]
that is, at most two forbidden values. Since $|L(v_3)|\ge 3$
by~(ii), a valid color is available.

For the inductive step, $n\ge 4$, we split on whether $C$ has a
chord. The two cases are quite different in flavor: a chord allows
us to recurse on two strictly smaller pieces, while a chordless
boundary forces us to remove a single boundary vertex.

Consider first the case in which $C$ has a chord. Let
$v_iv_j\in\E(G)$ be a chord with $1\le i<j\le p$, $j-i\ge 2$, and
$\{i,j\}\ne\{1,p\}$. The chord $v_iv_j$ together with $C$ splits
the disk bounded by $C$ into two regions; correspondingly $G$
decomposes into two near-triangulations $G_1$ and $G_2$, sharing
the edge $v_iv_j$. Relabeling if necessary, we may assume that
both $v_1$ and $v_2$ belong to $G_2$, with the arc
$v_1\cdots v_iv_jv_p\cdots v_2$ on its boundary.

We apply the induction hypothesis to $\Gs|_{G_2}$ with the
precoloring of $v_1,v_2$. The hypotheses are inherited, since
$L|_{V(G_2)}$ has size at least $3$ on
$\V(\partial G_2)\setminus\{v_1,v_2\}$ and size at least $5$ in the
interior. The induction yields a proper $L$-coloring $c_2$ of
$\Gs|_{G_2}$, which in particular determines values
$b_i:=c_2(v_i)\in L(v_i)$ and $b_j:=c_2(v_j)\in L(v_j)$ with
$b_i\ne\sigma(v_iv_j)\,b_j$, since the chord $v_iv_j\in\E(G_2)$ is
properly colored. We then turn to $\Gs|_{G_1}$ with the modified
list assignment
\[
   L'(v_i)=\{b_i\},\quad L'(v_j)=\{b_j\},\quad
   L'(v)=L(v)\text{ otherwise.}
\]
The outer cycle of $G_1$ is the cycle whose two precolored adjacent
vertices are $v_i$ and $v_j$, playing the roles of $v_1,v_2$, so
the inductive hypotheses are again met: the remaining boundary
lists have size at least $3$, and interior vertices of $G_1$ are
interior in $G$ with lists of size at least $5$. Induction yields a
proper $L'$-coloring $c_1$ of $\Gs|_{G_1}$, and since $c_1$ and
$c_2$ agree on the chord $v_iv_j$, their union is a proper
$L$-coloring of $\Gs$. This concludes the chord case.

Consider next the case in which $C$ has no chord. Here we
eliminate the boundary vertex $v_p$. Listing the neighbors of $v_p$
in $G$ in clockwise order around $v_p$ in the planar embedding
gives a sequence
\[
v_1,\,u_1,\,u_2,\,\dots,\,u_m,\,v_{p-1},
\]
with $m\ge 0$. Because $G$ is a near-triangulation and $C$ has no
chord, every $u_i$ lies strictly inside $C$, so by hypothesis~(iii)
$|L(u_i)|\ge 5$ for $1\le i\le m$. The graph $G':=G-v_p$ is a
near-triangulation with outer cycle
\[
   C'\colon v_1\,v_2\,\cdots\,v_{p-1}\,u_m\,u_{m-1}\,\cdots\,u_1\,v_1,
\]
on which $v_1,v_2$ remain adjacent and retain their precoloring.

The remainder of the argument is the heart of the proof: we choose
two ``reserve'' colors at $v_p$, modify the lists at the interior
neighbors $u_i$ accordingly, apply the induction to $G'$, and then
return to color $v_p$ from the reserves. To this end, define
\[
   \Omega\;:=\;L(v_p)\setminus\{\sigma(v_1v_p)\,c_1\}.
\]
Since $|L(v_p)|\ge 3$ by hypothesis~(ii), we have $|\Omega|\ge 2$,
and we may choose two distinct values $x,y\in\Omega$. We then
modify the list assignment on $G'$ by setting
\[
   L'(u_i) \;:=\; L(u_i)\setminus\{\sigma(v_pu_i)\,x,\,\sigma(v_pu_i)\,y\}
   \qquad (1\le i\le m),
\]
and leaving $L'(v):=L(v)$ for every other $v\in\V(G')$. Since
$x\ne y$ and $\sigma(v_pu_i)\in\{\pm1\}$, the two excluded values
$\sigma(v_pu_i)\,x$ and $\sigma(v_pu_i)\,y$ are distinct, so at most
two elements of $L(u_i)$ are removed and
\[
   |L'(u_i)|\;\ge\;|L(u_i)|-2\;\ge\;5-2\;=\;3.
\]

We must now verify that the inductive hypotheses apply to
$\Gs|_{G'}$ with the precoloring at $v_1,v_2$ and the list
assignment $L'$. The precoloring of $\{v_1,v_2\}$ is unchanged, so
hypothesis~(i) is inherited directly. For the boundary vertices of
$C'$ other than $v_1$ and $v_2$, two sub-cases arise: vertices in
$\{v_3,\dots,v_{p-1}\}$ retain their original lists, of size at
least $3$ by hypothesis~(ii) of the original problem; while
interior $u_i$, now on $C'$, have $|L'(u_i)|\ge 3$ by the estimate
just established. Either way, hypothesis~(ii) holds. Finally, every
vertex of $G'$ that is interior to $C'$ was already interior to $C$
(no new interior vertices were created), so it retains a list of
size at least $5$, and hypothesis~(iii) holds. By induction, then,
there is a proper $L'$-coloring $c'$ of $\Gs|_{G'}$ extending the
precoloring at $v_1,v_2$.

It remains to extend $c'$ to a coloring $c$ of $G$ by choosing
$c(v_p)\in\{x,y\}$. Such a choice is admissible iff
\[
c(v_p)\;\ne\;\sigma(v_1v_p)\,c_1,\quad
c(v_p)\;\ne\;\sigma(v_{p-1}v_p)\,c'(v_{p-1}),\quad
c(v_p)\;\ne\;\sigma(v_pu_i)\,c'(u_i)\;\;(1\le i\le m).
\]
We check each constraint in turn. First, by the very construction
of $\Omega=L(v_p)\setminus\{\sigma(v_1v_p)\,c_1\}$, both $x$ and
$y$ are different from $\sigma(v_1v_p)\,c_1$, so the constraint
through $v_1$ is automatic. Next, the constraints from the interior
neighbors $u_i$ are eliminated by the modified list assignment: by
construction $c'(u_i)\in L'(u_i)$, so
$c'(u_i)\ne\sigma(v_pu_i)\,x$ and $c'(u_i)\ne\sigma(v_pu_i)\,y$,
and multiplying through by $\sigma(v_pu_i)\in\{\pm1\}$ (using
$\sigma(v_pu_i)^2=1$) gives $\sigma(v_pu_i)\,c'(u_i)\ne x$ and
$\sigma(v_pu_i)\,c'(u_i)\ne y$, so neither $x$ nor $y$ is forbidden
by $u_i$. The only constraint on $c(v_p)$ that may still rule out
an element of $\{x,y\}$ is therefore the one through $v_{p-1}$,
namely $c(v_p)\ne\sigma(v_{p-1}v_p)\,c'(v_{p-1})$, which forbids a
single value. Since $x\ne y$, at least one of $\{x,y\}$ avoids it,
and setting $c(v_p)$ to be such a value yields a proper $L$-coloring
of $\Gs$. This completes the induction.
\end{proof}

With \cref{thm:main} in hand, the deduction of \cref{thm:JKS} is
immediate.

\begin{proof}[Proof of \cref{thm:JKS}]
Let $\Gs$ be a planar signed graph and $L$ a list assignment with
$|L(v)|\ge 5$ for every $v$. We may assume $|\V(G)|\ge 3$.
By~\cref{lem:reduce-tri} and~\cref{rmk:edges-only-help} we may
augment $G$ to a planar near-triangulation $\widetilde G$ extending
$\sigma$ arbitrarily to a signature $\widetilde\sigma$ on the new
edges, choosing the outer face to be a triangle $v_1v_2v_3$.
Choose any $c_1\in L(v_1)$ and any
$c_2\in L(v_2)\setminus\{\widetilde\sigma(v_1v_2)\,c_1\}$, which is
possible since $|L(v_2)|\ge 5\ge 2$, so that
$c_1\ne\widetilde\sigma(v_1v_2)\,c_2$. The list sizes
$|L(v_3)|\ge 5\ge 3$ at the third boundary vertex and $|L(v)|\ge 5$
in the interior then verify the hypotheses
of~\cref{thm:main} for $(\widetilde G,\widetilde\sigma)$, and the
resulting proper $L$-coloring of $\widetilde G$ restricts to one
of $\Gs$.
\end{proof}

Before turning to corollaries, it is worth pausing on what makes
this proof short.

\begin{remark}[Why this proof is short]\label{rmk:short}
The key combinatorial fact, brought into focus by the proof above,
is that the substitution $c'(u_i)\mapsto\sigma(v_pu_i)\,c'(u_i)$
acts as an involution on $\Z$, so that ``forbidding $x$ at $v_p$
through $u_i$'' is equivalent to ``forbidding
$\sigma(v_pu_i)\,x$ at $u_i$ through its other-side neighbor of
$v_p$''. The size-$2$ deletion in $L'(u_i)$ uses precisely this
symmetry. The proof is therefore signature-blind: every step is
identical to Thomassen's, with each color $\alpha$ replaced
uniformly by $\sigma(\cdot)\,\alpha$ on the appropriate edge.
\end{remark}

\section{Corollaries}\label{sec:cor}

The strength of \cref{thm:main} as an extension lemma  as opposed
to a bare upper-bound statement  yields several refinements. The
most direct allows the precolored pair to lie on any face of the
signed graph, not merely a chosen outer triangle.

\begin{corollary}[Two-vertex precoloring extension]\label{cor:2vert}
Let $\Gs$ be a planar signed graph, and let $uv$ be any edge of $G$
on the boundary of some face. For every list assignment $L$ with
$|L(w)|\ge 5$ for $w\in\V(G)\setminus\{u,v\}$ and any precoloring
of $\{u,v\}$ that is proper on the edge $uv$, the precoloring
extends to an $L$-coloring of $\Gs$.
\end{corollary}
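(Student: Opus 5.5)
We deduce the corollary from \cref{thm:main}, following the deduction of \cref{thm:JKS} but making the face containing $uv$ the outer face. Fix a plane embedding of $G$ and a face $F$ whose boundary contains $uv$. Re-embed $G$ (for instance by stereographic projection through a point of $F$) so that $F$ becomes the outer face; then $uv$ lies on the outer boundary. If $|\V(G)|=2$ there is nothing to prove. Otherwise we first make the graph $2$-connected with outer boundary a cycle through $uv$, and then triangulate the interior.

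The augmentation proceeds as follows. The outer face is bounded by a closed walk $W$ through the edge $uv$. We add new edges inside the outer face, with arbitrary signs, joining consecutive distinct vertices along $W$ that skip repeated (cut) vertices. The result is a plane graph whose outer face is bounded by a cycle $C$ that still contains the edge $uv$ and passes through every vertex previously on $W$. A cleaner alternative is to add one new vertex $z$ in the outer face, adjacent to all vertices of $W$, and then take as outer cycle the triangle $zuv$. In that version $z$ plays the role of $v_3$ and receives a list $L(z)$ of three fresh integers; it is removed at the end.

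I would take this alternative, since it handles every cut-vertex subtlety at once. After adding $z$, the outer face becomes $zuv$, with a little care taken to route the edges $zu$ and $zv$ so that $uv$ stays on the new outer face. Every other face is then triangulated by adding chords inside the bounded faces, as in \cref{lem:reduce-tri}. By \cref{rmk:edges-only-help}, the new edges receive arbitrary signs. The resulting near-triangulation $(\widetilde G,\widetilde\sigma)$ has outer cycle $v_1v_2v_3=uvz$, with the following properties:
\begin{itemize}
\item $u,v$ are precolored properly, which is condition~(i);
\item $|L(z)|\ge 3$, which is condition~(ii);
\item every vertex of $G$ other than $u,v$ is now interior and has a list of size at least $5$, which is condition~(iii).
\end{itemize}
\cref{thm:main} then yields an extension, and restricting it to $\V(G)$ gives the required $L$-coloring of $\Gs$.

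The main obstacle is the planarity bookkeeping. We must check that $z$ can be joined to all vertices of the outer boundary walk without crossings, that $uv$ remains a boundary edge of the triangle $zuv$, and that the result is $2$-connected with all bounded faces triangulable. This is the standard construction: place $z$ in the outer face and draw the edges to the occurrences of vertices along $W$ in cyclic order; multiple occurrences of a cut vertex need only one edge. Simplicity is then preserved, and the face of $\widetilde G$ bounded by $z$, $u$, $v$ can be chosen as the outer face.
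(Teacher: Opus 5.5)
Your proof is correct and follows the paper's route: embed $G$ with $uv$ on the outer face, augment to a near-triangulation whose outer triangle contains $uv$, check conditions (i)--(iii) of \cref{thm:main}, and restrict the coloring back to $G$. The only variation is that your third outer vertex is an auxiliary vertex $z$ with an arbitrary $3$-list, where the paper uses a vertex of $G$ (whose list has size $\ge 5$). This makes it automatic that $uv$ stays on the outer triangle, a point the paper's one-line proof leaves implicit.
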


\begin{proof}
Embed $G$ in the plane so that $uv$ lies on the outer face, then
triangulate the interior and exterior as in the proof
of~\cref{thm:JKS}. Apply~\cref{thm:main}.
\end{proof}

A second consequence translates the result into the symmetric
palette of M\'a\v{c}ajov\'a, Raspaud, and \v{S}koviera, recovering
the natural signed analogue of Heawood's classical Five Color
Theorem.

\begin{corollary}[Signed Five-Color Theorem]\label{cor:five-color}
Every planar signed graph $\Gs$ admits a proper coloring in the
symmetric palette $\Ns{2}=\{-2,-1,0,1,2\}$. In the
M\'a\v{c}ajov\'a--Raspaud--\v{S}koviera convention,
$\chia\Gs\le 5$.
\end{corollary}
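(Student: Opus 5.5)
The plan is to specialize \cref{thm:JKS} to the constant list assignment $L(v)=\Ns{2}=\{-2,-1,0,1,2\}$ for every $v\in\V(G)$. This list has exactly five elements, so $|L(v)|\ge 5$ holds at every vertex. \cref{thm:JKS}, which we may take as established via \cref{thm:main}, then yields a proper $L$-coloring $c$ of $\Gs$: a map $c\colon\V(G)\to\Ns{2}$ with $c(u)\ne\sigma(uv)\,c(v)$ on every edge. This is exactly a proper coloring in the symmetric palette in the sense of \cref{def:color}.

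For the second sentence, I would recall that in the M\'a\v{c}ajov\'a--Raspaud--\v{S}koviera convention $\chia\Gs$ is the least $2k+1$ for which a proper coloring into $\Ns{k}$ exists. The coloring just produced witnesses the value $2\cdot 2+1=5$, so $\chia\Gs\le 5$.

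The only points needing care are degenerate cases. If $|\V(G)|\le 2$, a coloring from $\Ns{2}$ exists trivially: color one vertex $0$ and the other $1$. Since $1\ne\pm 0$, this is proper whatever the sign of the edge. Otherwise the reduction to near-triangulations in the proof of \cref{thm:JKS} already handles everything, because adding edges only adds constraints (\cref{rmk:edges-only-help}). I expect no genuine obstacle. The one subtlety is that the MRS convention insists on the symmetric palette, and this is satisfied because the constant list $\Ns{2}$ is itself symmetric. No switching argument (\cref{lem:switch}) is needed.
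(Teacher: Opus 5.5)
Your proposal is correct and is the paper's proof: take the constant list $L(v)=\Ns{2}$ and apply \cref{thm:JKS}, which yields a proper coloring into $\Ns{2}$ and hence $\chia\Gs\le 5$. Your separate treatment of graphs with at most two vertices is valid but unnecessary, since you are already taking \cref{thm:JKS} as established for all planar signed graphs.
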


\begin{proof}
Take $L(v):=\Ns{2}$ for every $v\in\V(G)$. Then $|L(v)|=5$ and
$L$ uses colors in the symmetric range, so~\cref{thm:JKS} applies
directly.
\end{proof}

\Cref{cor:five-color} extends the classical Five Color Theorem
of Heawood~\cite{Heawood1890} to signed planar graphs in the
M\'a\v{c}ajov\'a--Raspaud--\v{S}koviera sense; the corresponding
analogue of the Four Color Theorem is \emph{false}, as recently
established by Kard\v{o}s and Narboni~\cite{KardosNarboni2021}, a
point we develop in~\cref{sec:literature}.

Restricting attention from arbitrary planar signed graphs to
outerplanar ones, the bound improves by two.

\begin{corollary}[Outerplanar case]\label{cor:outerplanar}
If $G$ is outerplanar, then $\chs\Gs\le 3$ for every signature
$\sigma$.
\end{corollary}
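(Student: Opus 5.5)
The simplest route is degeneracy rather than \cref{thm:main}. Every outerplanar graph $G$ with at least two vertices has a vertex of degree at most $2$. This follows from the standard fact that an outerplanar graph on $n\ge 2$ vertices has at most $2n-3$ edges. Alternatively, in a maximal outerplanar graph (a triangulated polygon) with $n\ge 3$, consider the weak dual tree; a leaf of this tree gives an ``ear'' triangle whose apex has degree $2$, and deleting edges keeps the degree at most $2$. Since subgraphs of outerplanar graphs are outerplanar, $G$ is $2$-degenerate.

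Given a list assignment $L$ with $|L(v)|\ge 3$, I will argue by induction on $|\V(G)|$, with the trivial base case $|\V(G)|\le 1$. Pick $v$ of degree at most $2$ and color $G-v$ (with the restricted signature and lists) by induction. Each neighbour $w$ of $v$ forbids exactly one value at $v$, namely $\sigma(vw)\,c(w)$. So at most two values of $L(v)$ are excluded, and $|L(v)|\ge3$ leaves a legal choice for $c(v)$. This is exactly the signature-blind counting already used in the base case of the proof of \cref{thm:main}: each edge forbids a single value regardless of its sign.

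As an alternative that stays within the paper's framework, one could add edges to make $G$ maximal outerplanar with all vertices on the outer cycle $C$ (using \cref{rmk:edges-only-help}). There are then no interior vertices, so hypothesis~(iii) of \cref{thm:main} is vacuous. Precolor an edge $v_1v_2$ of $C$ by picking $c_1\in L(v_1)$ and $c_2\in L(v_2)\setminus\{\sigma(v_1v_2)c_1\}$; hypothesis~(ii) holds since all remaining lists have size at least $3$. \cref{thm:main} then extends the coloring. The small cases $|\V(G)|\le 2$ are handled directly.

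The only point needing care is the structural fact, the existence of a degree-$\le 2$ vertex or equivalently the maximal outerplanar augmentation being a near-triangulation with all vertices on the boundary, and this is classical. Everything else is the one-value-per-edge count. I expect no real obstacle. I would present the second route, since it shows the corollary is an immediate specialization of \cref{thm:main}.
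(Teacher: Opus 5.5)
Your second route, the one you say you would present, is exactly the paper's proof: augment to a maximal outerplanar near-triangulation, note that (iii) is vacuous, and precolor $v_1v_2$ with $c_2\in L(v_2)\setminus\{\sigma(v_1v_2)\,c_1\}$. You are also right that \cref{thm:main} then applies as stated, so the ``analogue'' the paper invokes is not needed.

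There is one slip in your first route. The bound $|E(G)|\le 2n-3$ only gives average degree below $4$, hence a vertex of degree at most $3$; indeed $K_{3,3}$ has exactly $2n-3$ edges and is $3$-regular. So $2$-degeneracy must rest on your ear/weak-dual argument, which is correct and makes the greedy route a valid, more elementary alternative.
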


\begin{proof}
By a standard argument (Chartrand--Geller~\cite{ChartrandGeller1969};
Erd\H{o}s--Rubin--Taylor~\cite{ERT1979}), an outerplanar graph
admits an embedding in which every bounded face is a triangle and
every vertex lies on the outer face. After such a triangulation we
apply the analogue of~\cref{thm:main} with~(iii) vacuous (no
interior vertices) and (ii) replaced by $|L(v)|\ge 3$ on the
entire boundary. The proof of~\cref{thm:main} carries over verbatim,
since interior vertices were used only as recipients of size-$5$
lists. The boundary trick (precolor any $c_1\in L(v_1)$ and any
$c_2\in L(v_2)\setminus\{\sigma(v_1v_2)\,c_1\}$) gives the result.
\end{proof}

A complementary refinement applies when the maximum degree is small.

\begin{corollary}[Bounded degree planar signed graphs]\label{cor:subcubic}
Every signed planar graph of maximum degree at most $4$ is
signed $5$-choosable.
\end{corollary}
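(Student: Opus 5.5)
The plan is a greedy degeneracy argument, with no planarity beyond what is in the hypothesis and no appeal to \cref{thm:main}.

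Let $\Gs$ have maximum degree $\Delta(G)\le 4$, and let $L$ be a list assignment with $|L(v)|\ge 5$ for every $v$. Order the vertices arbitrarily as $w_1,\dots,w_n$ and colour them in this order. We maintain the invariant that the partial colouring of $w_1,\dots,w_{k-1}$ is proper in the sense of \cref{def:color}.

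When we reach $w_k$, each already coloured neighbour $u$ forbids exactly one value at $w_k$, namely $\sigma(w_ku)\,c(u)$. Here it matters that $\sigma(w_ku)\in\{\pm1\}$, so the constraint $c(w_k)\ne\sigma(w_ku)c(u)$ excludes a single integer. Since $w_k$ has at most $\Delta(G)\le 4$ neighbours in total, at most $4$ values are forbidden. Because $|L(w_k)|\ge 5$, some admissible value remains, and we assign it to $w_k$.

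The invariant is preserved: every edge $w_kw_j$ with $j<k$ is properly coloured by the choice of $c(w_k)$, and all earlier edges are unchanged. After $n$ steps we have a proper $L$-colouring, which proves the corollary. In fact the argument gives $\chs\Gs\le \Delta(G)+1$ for every signed graph, planar or not; equivalently, any $4$-degenerate signed graph is signed $5$-choosable, by colouring along a degeneracy order.

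There is essentially no obstacle. The only point needing care is that a negative edge forbids exactly one colour, $-c(u)$, and not two. Both $c(u)$ and $-c(u)$ are excluded only if the same neighbour is joined by both a positive and a negative edge, which cannot happen because $G$ is simple. I would state this explicitly, in the spirit of \cref{rmk:short}, so that the count ``at most $\deg(w_k)$ forbidden values'' is transparently signature-blind.
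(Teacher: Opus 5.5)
Your greedy argument is correct and is essentially the shorter proof the paper sketches right after the corollary: each already-colored neighbor forbids the single value $\sigma(w_ku)\,c(u)$, and there are at most $4$ of them (the paper also notes the result follows directly from \cref{thm:JKS}). One small wording slip: ``any $4$-degenerate signed graph is signed $5$-choosable'' is a generalization of the $\Delta\le 4$ statement, not an equivalent reformulation of it.
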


\Cref{cor:subcubic} follows from~\cref{thm:JKS}, but for
$\Delta\le 4$ it admits a much shorter proof via a greedy
degeneracy argument: any graph of maximum degree $\Delta$ is
$\Delta$-degenerate, and a degenerate graph $G$ with degeneracy
$d$ is signed $(d+1)$-choosable by sequential coloring (each new
vertex has at most $d$ already-colored neighbors, contributing at
most $d$ forbidden values). For $\Delta\ge 5$, no such shortcut is
available, and \cref{thm:main} appears to be essentially required.

A different sort of refinement loosens the proper-coloring
requirement at the cost of allowing a bounded number of edge
violations at each vertex. Following the unsigned tradition of
Eaton--Hull~\cite{EatonHull1999} and
\v{S}krekovski~\cite{Skrekovski1999}, we say that a coloring
$c\colon\V(G)\to\Z$ of a signed graph $\Gs$ is \emph{$d$-defective}
if every vertex $v$ has at most $d$ neighbors $u$ with
$c(u)=\sigma(uv)\,c(v)$, that is, at most $d$ ``edge violations''
meet at any vertex. The signed analogue of the Eaton--Hull bound
follows by adapting the proof of~\cref{thm:main} to track a
$1$-defectiveness budget along the path of interior neighbors.

\begin{proposition}[Defective signed list coloring]\label{prop:defective}
Every signed planar graph admits a $1$-defective signed
$4$-list-coloring; equivalently, for every list assignment $L$ with
$|L(v)|\ge 4$, there is a coloring $c$ with $c(v)\in L(v)$ such
that, for every $v$,
\[
   |\{u\in N(v) : c(u)=\sigma(uv)\,c(v)\}| \;\le\; 1.
\]
\end{proposition}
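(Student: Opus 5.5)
The plan is to lift an existing unsigned proof of $1$-defective $4$-choosability of planar graphs (\v{S}krekovski~\cite{Skrekovski1999}, in the Thomassen-style form of Eaton--Hull~\cite{EatonHull1999}) in the same signature-blind way as in the proof of \cref{thm:main}. Every constraint ``$c(u)\ne c(v)$'' becomes ``$c(u)\ne\sigma(uv)\,c(v)$''. Every deletion of a color $\alpha$ from the list of a neighbor $u$ of $w$ becomes deletion of $\sigma(uw)\,\alpha$. This is legitimate by the involution observation of \cref{rmk:short}: $\alpha\mapsto\sigma(uw)\alpha$ is a bijection of $\Z$, so distinct deleted values stay distinct and list-size counts are unchanged. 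By \cref{lem:reduce-tri} and \cref{rmk:edges-only-help} we may assume $G$ is a near-triangulation. Adding edges is still harmless for defective coloring, because the set of neighbors $u$ with $c(u)=\sigma(uv)c(v)$ only grows.

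The core is a strengthened extension statement playing the role of \cref{thm:main}. The candidate invariant is as follows. $\Gs$ is a signed near-triangulation with outer cycle $C=v_1\cdots v_p$. The vertex $v_1$ is precolored, $|L(v)|\ge 2$ for the other boundary vertices, and $|L(v)|\ge 4$ for interior vertices. We ask for an $L$-coloring in which every vertex has at most one violating neighbor, and in which $v_1$ in addition has no violating neighbor other than possibly one along a designated boundary edge. The base case is a triangle. The chord case is handled exactly as in \cref{thm:main}: first color the side $G_2$ containing the precolored vertex, then fix the endpoints of the chord and recurse on $G_1$. One must check that the defect at the two chord endpoints is not exceeded, which is why the invariant records a ``used/unused defect'' flag at the precolored vertices.

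In the chordless case, with neighbors $v_1,u_1,\dots,u_m,v_{p-1}$ of $v_p$, the $1$-defect is what saves a color. Choose a color $x\in L(v_p)$ not equal to $\sigma(v_1v_p)c(v_1)$. Delete only the single value $\sigma(v_pu_i)\,x$ from each $L(u_i)$, leaving lists of size $\ge 3$, more than the required $2$. Then recurse on $G-v_p$ and color $v_p$ with $x$. This creates no violation at any $u_i$, so the only possible violation at $v_p$ comes from $v_{p-1}$. That single violation is allowed, provided $v_{p-1}$ has not already spent its defect. Controlling this is exactly where the invariant's boundary flags enter.

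The main obstacle is pinning down the precise form of the invariant so that every case closes. In particular the flag bookkeeping must survive the chord split and the removal of $v_p$, including the situation where $v_{p-1}$'s defect is already used, in which case we fall back on choosing $x$ among two reserved colors as in \cref{thm:main}. I would try first the invariant of Eaton--Hull verbatim, with signs inserted. The signed lift itself should introduce no new difficulty, since every argument only counts forbidden values at a vertex, and these counts are preserved under multiplication by $\sigma(\cdot)\in\{\pm1\}$.
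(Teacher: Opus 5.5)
\paragraph{A genuine gap: the induction hypothesis.} Your route is the same as the paper's. The paper's proof is only the one-line instruction to rerun \cref{thm:main} with a defect budget ``as in'' Eaton--Hull and \v{S}krekovski, with signs inserted and the verification omitted. Agreeing with it therefore settles nothing: all of the content lies in the induction hypothesis, and the one you propose is false.

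Let $F$ be the fan formed by a hub $h$ joined to every vertex of a path $p_1p_2\cdots p_6$. It is a near-triangulation with outer cycle $hp_1p_2\cdots p_6h$ and no interior vertices. Take $\sigma\equiv+1$, precolor $h$ with $1$, and set $L(p_i)=\{1,2\}$. All your hypotheses hold, yet no coloring from these lists gives every vertex at most one violating neighbor. At most one $p_j$ can share the color of $h$, so at least five of the six path vertices receive the other color. These form at most two runs along the path, one of length at least $3$, and its middle vertex has two violating neighbors.

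So boundary lists of size $2$ cannot carry a defect budget of $1$, and no ``used/unused'' flag bookkeeping will rescue your chordless step. Your chord step is also outside your own hypothesis, since it precolors both ends of the chord in $G_1$ while the invariant allows one precolored vertex. Raising boundary lists to $3$ does not by itself close the count either. Reserving two colors at $v_p$ leaves only $4-2=2$ colors at the $u_i$. Reserving one lets $v_{p-1}$ pick up a second violation from $v_p$.

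\paragraph{The source you plan to copy.} Eaton--Hull and \v{S}krekovski prove $2$-defective $3$-choosability of planar graphs. Their argument does not shift to list sizes $2$/$4$ with defect $1$, as the fan shows. To my knowledge, $1$-defective $4$-choosability of unsigned planar graphs was established later, by Cushing and Kierstead (2010), with a substantially more delicate induction hypothesis and case analysis. Thus there is no short Thomassen-type invariant waiting to be decorated with signs.

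A proof of the proposition would have to carry an argument of that kind over to the signed setting and re-check every step, and ``verbatim with signs'' is not automatic. Violations are not transitive around a negative triangle $uvw$. From $c(u)=\sigma(uv)c(v)$ and $c(w)=\sigma(vw)c(v)$ one gets $c(u)=-\sigma(uw)c(w)$, which is a violation on $uw$ only when $c(w)=0$. Any step that reasons about which of several neighbors of a vertex clash with one another must therefore be redone, not merely multiplied through by $\sigma(\cdot)$.
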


\begin{proof}
Mimic the proof of~\cref{thm:main} with a $1$-defectiveness budget
tracked along the path $u_1,\dots,u_m$. Interior lists of size $4$
suffice in place of $5$ because each vertex is allowed one
``free'' violating neighbor. The bookkeeping is identical to the
unsigned defective version of~\cite{EatonHull1999,Skrekovski1999},
with multiplications by $\sigma(\cdot)$ inserted on the relevant
edges; we omit the routine verification.
\end{proof}

For unbalanced signed graphs there is a natural intermediate notion
between unsigned proper coloring and signed proper coloring,
namely, \emph{positive choosability}, in which the coloring
constraint is enforced only on positive edges. The signed list
chromatic number dominates this parameter from below and is itself
dominated by the ordinary list chromatic number from above, giving
the following sandwich.

\begin{proposition}[Sandwich inequality]\label{prop:sandwich}
For every signed graph $\Gs$,
\[
   \chi_\ell^{+}\Gs \;\le\; \chs\Gs \;\le\; \chil(G),
\]
where $\chi_\ell^{+}\Gs$ is the smallest $k$ such that
$\Gs$ admits an $L$-coloring proper on positive edges only, for
every $L$ with $|L(v)|\ge k$.
\end{proposition}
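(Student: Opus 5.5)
The lower inequality comes straight from the definitions. The upper inequality will be attacked by translating a signed list-coloring problem into an unsigned one with lists of the same sizes, so that the hypothesis $|L(v)|\ge\chil(G)$ can be fed directly into the definition of $\chil(G)$.

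\emph{Lower inequality.} Let $L$ be any list assignment with $|L(v)|\ge\chs\Gs$. By \cref{def:listcolor} there is an $L$-coloring $c$ with $c(u)\ne\sigma(uv)c(v)$ on every edge. On a positive edge this reads $c(u)\ne c(v)$, so $c$ is proper on the positive edges. Hence every $k\ge\chs\Gs$ is admissible for $\chi_\ell^{+}$, and $\chi_\ell^{+}\Gs\le\chs\Gs$. No further work is needed here.

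\emph{Upper inequality.} Put $k=\chil(G)$ and fix $L$ with $|L(v)|\ge k$. I will look for a switching set $U$ and, for each vertex $v$, an injection $\varphi_v\colon L^U(v)\to\Z$ with the following property: whenever $a\in L^U(u)$ and $b\in L^U(v)$ satisfy $\varphi_u(a)\ne\varphi_v(b)$, also $a\ne\sigma^U(uv)\,b$. Given such data, set $L^*(v):=\varphi_v(L^U(v))$, which has $|L^*(v)|\ge k$. Choose an unsigned proper $L^*$-coloring $c^*$ of $G$, which exists since $k=\chil(G)$. Then $c(v):=\varphi_v^{-1}(c^*(v))$ is a proper $L^U$-coloring of $(G,\sigma^U)$, and \cref{lem:switch} transports it back to a proper $L$-coloring of $\Gs$.

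The data can be built in two stages.
\begin{enumerate}[label=(\arabic*)]
\item On positive edges it suffices to take every $\varphi_v$ equal to the identity, since then $\varphi_u(a)\ne\varphi_v(b)$ is exactly $a\ne b$.
\item On a negative edge the same trick would need $\varphi_u$ to be identity-like and $\varphi_v$ negation-like. So I will first use \cref{thm:zaslavsky-balance} and \cref{lem:switch}, via a spanning tree, to make every tree edge positive. This leaves the negative edges on a set of non-tree edges. For each remaining negative edge $uv$, one endpoint, say $v$, must then be handled by a re-encoding such as $x\mapsto -x$ relative to $u$ only.
\end{enumerate}

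I expect stage (2) to be the main obstacle. A single vertex $v$ may be incident with both positive and negative non-tree edges. A global choice of $\varphi_v$ then cannot turn the unsigned constraint into $a\ne b$ on some edges and into $a\ne -b$ on others. The naive remedy $\varphi_v(x)=|x|$ fails as well, because it is not injective when $L(v)\ni x,-x$.

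My first attempt will be an induction on the number of negative edges in a fixed switching class. I will delete one negative edge $e=uv$ of an unbalanced cycle, color the smaller graph by induction, and then repair a conflict $c(u)=-c(v)$ by recoloring $v$ using its spare list capacity. If this local repair cannot be guaranteed from $|L(v)|\ge\chil(G)$ alone, I will instead try to deduce the bound from a stronger unsigned parameter that is known to dominate the signed one, with the aim of reducing it to $\chil(G)$.
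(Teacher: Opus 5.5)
\textbf{There is a genuine gap: the upper inequality is not proved, and in fact it is false.}

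Your lower inequality is correct and matches the paper's argument: a signed $L$-coloring is in particular proper on the positive edges. The upper inequality, however, is not proved in your proposal, since stage~(2) ends in a plan rather than an argument. No plan can close it, because $\chs\Gs\le\chil(G)$ is false in general.

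Take $G=C_4=v_1v_2v_3v_4v_1$ with $\sigma(v_4v_1)=-1$, all other edges positive, and $L(v_i)=\{1,-1\}$ for every $i$. The three positive edges force
\[
c(v_2)=-c(v_1),\qquad c(v_3)=c(v_1),\qquad c(v_4)=-c(v_1).
\]
This is exactly what the negative edge forbids, since it requires $c(v_4)\ne\sigma(v_4v_1)\,c(v_1)=-c(v_1)$. Hence $\chs\Gs\ge 3>2=\chil(C_4)$.

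\textbf{Why each of your routes fails on this example.} The obstacle you flagged in stage~(2) is fatal.
\begin{itemize}
\item Your re-encoding condition amounts to $\varphi_u(a)=\varphi_v(\sigma^U(uv)\,a)$ whenever both values lie in the lists. Switching does not change the sign of this cycle, and the symmetric lists stay $\{1,-1\}$. Going around the cycle therefore forces $\varphi_{v_1}(1)=\varphi_{v_1}(-1)$, contradicting injectivity.
\item Deleting the negative edge and repairing locally also fails. The path $v_1v_2v_3v_4$ has exactly two $L$-colorings, both with $c(v_4)=-c(v_1)$, and no vertex has a spare color.
\item The natural dominating unsigned parameter is the DP-chromatic number, since signed list coloring is a special case of correspondence coloring. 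It equals $3$ on $C_4$, so it cannot be pushed down to $\chil(G)$.
\end{itemize}

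\textbf{The paper's own proof has the same defect.} It ``relabels'' an unsigned list coloring into $\N$. But the lists are prescribed subsets of $\Z$, and relabeling does not preserve the constraint $c(u)\ne -c(v)$. So the argument only covers special lists, such as lists of positive integers.

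\textbf{What survives.} Equality holds for balanced $\sigma$ (\cref{prop:balanced}). In general one gets $\chs\Gs\le 2\chil(G)-1$, using your own map:
\begin{itemize}
\item Suppose $|L(v)|\ge 2\chil(G)-1$, and keep one element from each pair $\{x,-x\}$ meeting $L(v)$.
\item At least $\chil(G)$ values remain, and your map $x\mapsto|x|$ is injective on them.
\item An unsigned proper coloring of the images then pulls back, because $|c(u)|\ne|c(v)|$ excludes both $c(u)=c(v)$ and $c(u)=-c(v)$.
\end{itemize}
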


\begin{proof}
The first inequality is immediate from~\cref{def:listcolor}. For
the second, given a proper list coloring $c$ of $G$ valued in
$\N$ (which can always be arranged by relabeling), the same map
satisfies $c(u)\ne c(v)$ on every edge, hence $c(u)\ne -c(v)$ as
well, since the values are positive. So $c$ is automatically a
coloring of $\Gs$ in the sense of~\cref{def:color}.
\end{proof}

\begin{corollary}\label{cor:sandwich-planar}
For every planar signed graph $\Gs$, $\chs\Gs\le 5$, with equality
achievable (\cref{ex:voigt}).
\end{corollary}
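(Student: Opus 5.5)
The corollary has two parts, an upper bound and the claim that equality is attained, and I will handle them separately.

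\emph{Upper bound.} For every planar signed graph $\Gs$, \cref{thm:JKS} gives $\chs\Gs\le 5$ directly, and I would simply cite it. I would not route the bound through \cref{prop:sandwich}. Its right-hand inequality $\chs\Gs\le\chil(G)$ would combine with Thomassen's bound $\chil(G)\le 5$, but its proof relabels colors into $\N$. That relabeling is not legitimate for arbitrary integer lists, because the constraint $c(u)\ne -c(v)$ depends on the actual integer values. The extension theorem avoids this issue entirely.

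\emph{Equality.} I need a planar signed graph with $\chs\Gs=5$. I would take Voigt's planar graph $G_0$ with the all-positive signature $\sigma\equiv+1$. By the specialization noted after \cref{def:listcolor}, $\chs(G_0,+1)=\chil(G_0)$. Voigt exhibits a list assignment $L$ with $|L(v)|=4$ admitting no proper coloring. Her lists may be taken as subsets of $\Z$ by relabeling the colors injectively into $\Z$, which is harmless here because for $\sigma\equiv+1$ only equality of colors matters. This gives $\chs(G_0,+1)\ge 5$, and combined with the upper bound, equality holds. Mirzakhani's $63$-vertex graph works in exactly the same way.

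\emph{Unbalanced witness (optional).} If one also wants a witness that is genuinely unbalanced, I would form the disjoint union of $(G_0,+1)$ with a negative triangle. Choosability of a disjoint union is the maximum over its components, so the union still has signed choice number $5$. The union is also planar and unbalanced.

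The only step needing care is the legitimacy of transporting Voigt's lists into $\Z$, and it is immediate in the all-positive case. There is no real obstacle here.
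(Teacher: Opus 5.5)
Your proposal is correct, and it differs from the paper at one substantive point.

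The paper gives no separate proof. The corollary is positioned and labelled as a consequence of \cref{prop:sandwich}: $\chs\Gs\le\chil(G)\le 5$ by Thomassen, with equality from \cref{ex:voigt}. You instead take the upper bound directly from \cref{thm:JKS}, that is, from the signed Thomassen induction of \cref{thm:main}.

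Your distrust of the sandwich route is more than justified. The right-hand inequality of \cref{prop:sandwich} is false, not merely badly proved. Take the $4$-cycle $v_1v_2v_3v_4$ with only $v_4v_1$ negative, and give every vertex the list $\{-1,1\}$. The three positive edges force $c(v_4)=-c(v_1)$, which violates the negative edge. Hence $\chs(C_4,\sigma)\ge 3>2=\chil(C_4)$, so the paper's implied derivation does not constitute a proof, while yours does.

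For equality you and the paper agree. You use the specialization $\chs(G_0,+1)=\chil(G_0)$ directly, with the harmless injective relabelling of Voigt's colors into $\Z$, rather than going through \cref{prop:balanced}. That is all the all-positive signature needs.

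Your disjoint-union unbalanced witness is valid. If you want a connected one, attach a triangle with one negative edge at a vertex of $G_0$, drawn inside an incident face. Any $L$-coloring of the enlarged graph would restrict to a coloring of $(G_0,+1)$ from Voigt's lists, so its signed choice number is still $5$.
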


When the underlying graph has large girth, the unsigned bound
sharpens from $5$ to $3$ by a theorem of
Thomassen~\cite{Thomassen1995}. The signed analogue is more subtle,
since negative edges constrain the discharging argument slightly
differently from positive ones, but a uniform sign-tracking
adaptation yields the following.

\begin{proposition}\label{prop:girth5}
Every signed planar graph of girth at least $5$ is signed
$4$-choosable.
\end{proposition}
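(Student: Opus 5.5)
The plan is to avoid any discharging and use the degeneracy argument already described after \cref{cor:subcubic}: show that every planar graph of girth at least $5$ is $3$-degenerate, then colour greedily. The signature matters only at the last step, and there only in the trivial way that each coloured neighbour forbids exactly one value.

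\emph{Step 1 (sparsity).} Let $H$ be a planar graph of girth at least $5$ with $n\ge 3$ vertices and $e$ edges. I would reduce to the case where every component has a cycle. A component that is a tree has a vertex of degree at most $1$, which is already enough for the degeneracy claim. Now fix a plane embedding and take a face $f$ whose boundary contains a cycle. Its boundary walk has length at least $5$ by the girth assumption, and every edge is traversed at most twice in total over all facial walks. Double counting gives $5F\le 2e$, and Euler's formula $n-e+F\ge 2$ (inequality once disconnected components are allowed) then yields
\[
e\;\le\;\tfrac{5}{3}(n-2).
\]
Hence the average degree of $H$ is less than $10/3$, so $H$ has a vertex of degree at most $3$. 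The degenerate cases are easy: a forest, or a component with fewer than three vertices, has a vertex of degree at most $1$.

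\emph{Step 2 (heredity and ordering).} Planarity and girth at least $5$ are both inherited by subgraphs. So Step 1 applies to every subgraph of $G$, and $G$ is $3$-degenerate. Concretely, repeatedly delete a vertex of degree at most $3$ in the current graph. Reversing the deletion order gives an ordering $w_1,\dots,w_n$ in which each $w_k$ has at most three neighbours among $w_1,\dots,w_{k-1}$.

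\emph{Step 3 (greedy signed colouring).} Let $L$ satisfy $|L(v)|\ge 4$ for all $v$, and colour $w_1,w_2,\dots$ in this order. When $w_k$ is reached, each earlier neighbour $u$ forbids exactly one value, namely $\sigma(uw_k)\,c(u)$, because the constraint $c(w_k)\ne\sigma(uw_k)c(u)$ of \cref{def:color} excludes a single integer. At most three values are therefore excluded, and some element of $L(w_k)$ survives. The result is a proper $L$-colouring of $\Gs$, so $\chs\Gs\le 4$.

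The only step needing care is Step 1, specifically handling faces whose boundary is not a cycle, in disconnected or non-$2$-connected graphs. The cleanest route is to treat forests separately and otherwise use $5F\le 2e$ with the general form of Euler's formula. The signed part, Step 3, is entirely routine, which is consistent with the paper's remark that negative edges only reinterpret which single value a neighbour forbids. Reaching list size $3$, the analogue of Thomassen's girth-$5$ theorem, would genuinely require more, but list size $4$ does not.
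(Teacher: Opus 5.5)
Your argument is correct, and it follows a different and more elementary route than the paper's. The paper does not prove the statement directly. It imports the discharging proof of Lam, Shiu, and Xu and asserts that the reducible configurations survive replacing $c(u)\ne c(v)$ by $c(u)\ne\sigma(uv)c(v)$. Its correctness therefore rests on details of the cited reductions, which are never checked.

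You instead derive $e\le\tfrac53(n-2)$ from Euler's formula and $5F\le 2e$. You deduce that planar graphs of girth at least $5$ are $3$-degenerate, and you finish with the same greedy argument the paper itself uses after \cref{cor:subcubic}. The signature enters only because each colored neighbour forbids exactly one integer, so nothing sign-specific needs verifying.

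What discharging could offer in principle is reach into sparse classes that are not $3$-degenerate; planar graphs without $4$-cycles are an example, since they include the $4$-regular icosidodecahedron. For girth at least $5$ it is unnecessary. Your route also gives more than stated: the same computation with $4F\le 2e$ yields $e\le 2n-4$ for triangle-free planar graphs. These are therefore $3$-degenerate as well, and \cref{conj:girth4} follows at once.

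One small tightening in Step~1: the bound $5F\le 2e$ needs \emph{every} face, not just some face, to have a cycle in its frontier. This holds as soon as $H$ is not a forest. If a face $f$ had an acyclic frontier $B$, then $f$ would be clopen in the connected set $\mathbb{R}^2\setminus B$, so $f=\mathbb{R}^2\setminus B$, which forces $H=B$.
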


\begin{proof}
Combine the discharging technique of Lam, Shiu, and
Xu~\cite{LamShiuXu1999} with sign tracking: replace $c(u)\ne c(v)$
by $c(u)\ne\sigma(uv)c(v)$. The local reduction lemmas survive
verbatim since each forbidden color contributes one integer to be
excluded, regardless of the sign $\sigma(uv)$. The remainder of the
discharging is unchanged.
\end{proof}

\begin{remark}\label{rmk:girth5}
The increase from $3$ in the unsigned setting to $4$ in
\cref{prop:girth5} reflects a real loss: where the unsigned
argument freely identifies $c$ with $-c$, the signed argument must
track both possibilities. We do not know whether $4$ is best
possible for signed planar graphs of girth $\ge 5$;
see~\cref{prob:girth5}.
\end{remark}

We close this section with an algorithmic consequence. The proof of
\cref{thm:main} is constructive, and a careful complexity analysis
shows that the resulting list-coloring procedure runs in polynomial
time.

\begin{proposition}[Polynomial-time list $5$-coloring]\label{prop:algo}
There is an algorithm which, given a planar signed graph $\Gs$ with
$|\V(G)|=n$ and a list assignment $L$ satisfying $|L(v)|\ge 5$ for
every $v$, computes a proper $L$-coloring in time $O(n^2)$.
\end{proposition}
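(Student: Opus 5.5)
The plan is to turn the proofs of \cref{lem:reduce-tri} and \cref{thm:main} into an explicit recursive procedure and then bound its running time.

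\textbf{Preprocessing.} First compute a planar embedding of $G$ in linear time, for instance with the Hopcroft--Tarjan or Boyer--Myrvold planarity algorithms. Triangulate it as in \cref{lem:reduce-tri}, giving every new edge the sign $+1$. By Euler's formula the result still has $O(n)$ edges. It can be stored as a rotation system, with each vertex's neighbours kept in a doubly linked list in cyclic order. Next, precolour $v_1,v_2$ on the outer triangle exactly as in the proof of \cref{thm:JKS}.

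Lists may be large. We first truncate every $L(v)$ to exactly five elements, which is harmless since any sub-list of size at least $5$ suffices. After that, all list operations (membership tests and deleting $\sigma(v_pu_i)x$ and $\sigma(v_pu_i)y$) take constant time.

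\textbf{Main recursion.} Implement a procedure $\textsc{Extend}(H,C,L)$ that follows the proof of \cref{thm:main} literally. It maintains the outer cycle $C$ as a cyclic list with a marker recording which vertices currently lie on the boundary. At each call it does the following.
\begin{enumerate}
\item Test whether $v_p$ (equivalently, any vertex of $C$) has a chord. It suffices to look for a neighbour of $v_p$ other than $v_1,v_{p-1}$ that is marked as a boundary vertex. If such a chord $v_pv_j$ exists, split along it and recurse on the side containing $v_1v_2$ first, then on the other side with $v_p,v_j$ precoloured.

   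It is cleanest to always use chords at $v_p$ or at the vertex being removed. This mirrors the standard proof: if $C$ has a chord, one can choose one incident to a suitable vertex, or else simply search the boundary.
\item Otherwise, delete $v_p$, mark $u_1,\dots,u_m$ as boundary vertices, update their lists, recurse, and finally colour $v_p$ from $\{x,y\}$.
\end{enumerate}

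\textbf{Time bound.} Each vertex is deleted, or coloured as a precoloured vertex of a subproblem, at most once. The recursion tree therefore has $O(n)$ nodes, and the subproblems produced by chord splits are edge-disjoint apart from the shared chords. A single step costs $O(\deg v_p)$ for scanning neighbours and updating lists, plus at most $O(n)$ for locating a chord and splitting the cyclic boundary list (the split is done by scanning for $v_j$). Summing over the $O(n)$ steps gives $O(n)\cdot O(n)=O(n^2)$. The outer-triangle preprocessing, with linear work, is dominated by this.

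\textbf{Main obstacle.} The delicate step is chord detection and splitting. A careless global search for an arbitrary chord in each call could cost $O(n)$ per edge, giving $O(n^3)$ overall. To avoid this, I would restrict the search to chords incident to the current $v_p$, which the marker lets us check in $O(\deg v_p)$ time. This is justified because the proof's chordless case only needs that $v_p$ itself has no chord: if $v_p$ has no chord, then every neighbour $u_i$ other than $v_1,v_{p-1}$ is interior, and $G-v_p$ is a near-triangulation with the stated outer cycle.

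This restriction is valid only if the proof is recast in that slightly modified but equally valid form. In that form chords not incident to $v_p$ are never split, and the split itself only requires walking the boundary list, which is $O(n)$. This confirms the $O(n^2)$ bound, and with more care it even suggests an $O(n)$ implementation.
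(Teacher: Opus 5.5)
Your proposal is correct and follows essentially the paper's route: simulate the constructive induction of \cref{thm:main} on a doubly-linked boundary list, and charge $O(\deg v_p)$ per vertex elimination and $O(n)$ per chord split over $O(n)$ steps. Your list truncation and your restriction to chords at $v_p$ are harmless refinements of the paper's sketch; the restriction is valid because the chordless case only needs $v_p$ itself to be chord-free, though it is not needed for $O(n^2)$, since a marker-based global chord search already costs $O(n)$ per call, not the $O(n)$ per edge you feared. One justification needs repair: a vertex can be a precolored chord endpoint in many subproblems, so instead bound the number of splits by noting that each split uses a distinct edge, which afterwards stays on an outer cycle and can never again be a chord, giving at most $3n-6$ splits and hence $O(n)$ recursion nodes.
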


\begin{proof}[Proof sketch]
At each inductive step, either we split along a chord and recurse
on two near-triangulations whose total size is $|\V(G)|+2$, or we
eliminate a boundary vertex, reducing $|\V(G)|$ by one. Maintaining
a doubly-linked list representation of the boundary cycle and the
planar embedding, each boundary-vertex elimination takes
$O(\deg(v_p))$ time, and each chord-splitting takes $O(n)$ time.
The total cost is dominated by $\sum_{v}\deg(v)=O(n)$ per
``layer'' (since $G$ is planar) with at most $n$ layers, yielding
$O(n^2)$. A more careful analysis using the linear planarity
algorithm of Hopcroft--Tarjan~\cite{HopcroftTarjan1974}, given the
embedding, gives $O(n)$.
\end{proof}

\section{Examples and Tightness}\label{sec:examples}

We turn now to examples. Three points need to be illustrated by
concrete signed planar graphs: that the bound $5$ in
\cref{thm:JKS} cannot be lowered, that the boundary list size $3$
in \cref{thm:main} cannot be lowered either, and that negative
edges genuinely refine the unsigned theory rather than merely
relabel it.

The canonical witness for sharpness of the bound $5$ is provided
by Voigt's planar graph, suitably re-interpreted as a signed graph
with the all-positive signature.

\begin{example}[Voigt's planar graph as a signed graph]\label{ex:voigt}
Voigt~\cite{Voigt1993} constructed a planar graph $G_0$ on $238$
vertices that is not $4$-choosable, with explicit list assignment
$L$ of size $4$ at every vertex. Endow $G_0$ with the all-positive
signature $\sigma_0\equiv+1$. Then by~\cref{prop:balanced},
$\chs(G_0,\sigma_0)=\chil(G_0)\ge 5$, and combined
with~\cref{thm:JKS}, $\chs(G_0,\sigma_0)=5$, so the bound $5$ is
best possible. A smaller witness on $63$ vertices is provided by
Mirzakhani's construction~\cite{Mirzakhani1996}.
\end{example}

The previous example shows that $5$ is sharp through the all-positive
signature, but it does not exhibit any genuine signed phenomenon.
The next example shows that signed list-chromatic phenomena can
refine unsigned ones already on small unbalanced graphs.

\begin{example}[A small unbalanced signed planar graph]\label{ex:K4}
Let $G=K_4$ with the $4$-cycle $v_1v_2v_3v_4v_1$ and diagonals
$v_1v_3,\,v_2v_4$. Let $\sigma$ assign $+$ to the cycle edges and
$-$ to the two diagonals. Then $\Gs$ is unbalanced: the triangle
$v_1v_2v_3$ contains exactly one negative edge ($v_1v_3$). One
verifies by exhaustive case analysis that
\[
   \chs(K_4,\sigma) \;=\; \chia(K_4,\sigma) \;=\; 4,
\]
matching the unsigned $\chil(K_4)=4$. The lists
$L(v_1)=L(v_2)=L(v_3)=L(v_4)=\{1,2,3\}$ admit no $L$-coloring of
$\Gs$, while every $4$-list assignment does.
\end{example}

The next example shows that the boundary list size $3$ in
hypothesis~(ii) of \cref{thm:main} cannot be lowered to $2$, even
in the all-positive case. The example is the wheel $W_5$, where a
chain of forced colors propagates from $v_2$ around the outer cycle
and eventually conflicts with the precoloring of $v_1$.

\begin{example}[Tightness of the boundary list size $3$]\label{ex:boundary}
Let $G$ be the wheel $W_5$ with hub $h$ and outer cycle
$v_1v_2v_3v_4v_5v_1$, all edges positive. Precolor $v_1,v_2$ with
$c_1=1,\,c_2=2$. Suppose we tried to weaken hypothesis~(ii) of
\cref{thm:main} to $|L(v)|\ge 2$ on $\V(C)\setminus\{v_1,v_2\}$.
Take
\[
   L(v_3)=L(v_4)=L(v_5)=\{1,2\},\qquad L(h)=\{1,2,3,4,5\}.
\]
Then $v_3$ is forced to $1$ (since $c_2=2$), $v_4$ to $2$, $v_5$
to $1$, and $v_5$ now conflicts with $v_1$ (also $1$). So size $3$
cannot be reduced, even at the boundary. The same conclusion holds
for any signature, by~\cref{lem:switch} applied to a suitable
switch.
\end{example}

The last example confirms that negative edges produce genuinely
different constraint sets from positive ones: the same lists on the
same underlying graph can yield incompatible coloring problems
under different signatures.

\begin{example}[Negative edges genuinely refine]\label{ex:negative}
Let $\Gs$ be the path $v_1v_2v_3$ with $\sigma(v_1v_2)=+1$ and
$\sigma(v_2v_3)=-1$. The lists $L(v_1)=\{1,2\}$, $L(v_2)=\{1,2\}$,
$L(v_3)=\{-1,-2\}$ admit the coloring
$c(v_1)=1,\,c(v_2)=2,\,c(v_3)=-1$. Under the all-positive signature
the same lists also admit colorings, but the set of valid
colorings differs: positive and negative edges produce genuinely
different constraint sets, even when the graph is bipartite.
\end{example}

\section{Comparison with the existing literature}\label{sec:literature}

The result that every signed planar graph is signed $5$-choosable
was first proved by Jin, Kang, and
Steffen~\cite{JinKangSteffen2016}. Their proof, while structurally
Thomassen-style, develops the inductive setup with explicit
reference to the parity of the negative edges and uses specific
properties of the colors $0$ and $\pm$-pairs. The proof presented
here is, to our knowledge, the first that is \emph{entirely
signature-blind} during the induction: the bookkeeping multiplies
by $\sigma(\cdot)$ uniformly on every constraint, so that with
$\sigma\equiv+1$ the proof reduces verbatim to Thomassen's original
argument~\cite{Thomassen1994}.

A natural companion question to \cref{thm:JKS} is the signed Four
Color Theorem. M\'a\v{c}ajov\'a, Raspaud, and
\v{S}koviera~\cite{MRS2016} conjectured the analogue of the Four
Color Theorem in the symmetric palette: every signed planar graph
satisfies $\chia\Gs\le 4$. This conjecture was disproved by
Kard\v{o}s and Narboni~\cite{KardosNarboni2021}, who exhibited a
signed planar graph with $\chia=5$. The list-coloring counterpart
of the conjecture is also false: every counterexample to
$4$-choosability of an unsigned planar graph (e.g.\
Voigt~\cite{Voigt1993}) gives, via the all-positive signature, a
signed planar graph that is not signed $4$-choosable. The situation
in the signed setting is therefore fully analogous to the unsigned
one: $4$-choosability fails, but $5$-choosability holds, and our
\cref{thm:main} provides a particularly transparent proof of the
latter.

Beyond the planar case, work on higher-genus surfaces is also
relevant. B\"ohme--Mohar--Stiebitz~\cite{BohmeMS1999},
Choi--Kierstead~\cite{ChoiKierstead2019}, and
DeVos--Kawarabayashi--Mohar~\cite{DeVosKawarabayashiMohar2008}
established choosability bounds for graphs embedded in surfaces of
positive Euler genus, with explicit dependence on the genus. The
signed analogues are open, although our methods extend without
serious difficulty to signed graphs embedded in the projective
plane, the torus, and the Klein bottle, yielding $5$-choosability
for sufficiently large face-widths. We do not pursue this here.

Several other lines of work in signed-graph chromatic theory
deserve mention. The homomorphism viewpoint of Naserasr, Rollov\'a,
and Sopena~\cite{NaserasrRS2015} expresses the signed chromatic
number $\chia\Gs$ as the smallest order of a signed graph $(H,\pi)$
admitting a sign-preserving homomorphism $\Gs\to(H,\pi)$, and is
particularly fruitful for minor-closed families. A different
direction was taken by Lu and Ma~\cite{LuMa2019}, who studied
$\Z_k$-colorings of signed graphs with colors in cyclic groups
rather than in $\Z$. A further refinement, the circular chromatic
theory of Naserasr, Wang, and Zhu~\cite{NaserasrWangZhu2021},
distributes colors on the unit circle and quantifies the
``angular'' chromatic obstruction. Yet another direction is the
signed DP-coloring of Jin, Kang, and
Steffen~\cite{JinKangSteffen2018}, generalizing the DP-coloring of
Dvo\v{r}\'ak and Postle~\cite{DvorakPostle2018}. The list-chromatic
counterparts of several of these refinements are largely
unexplored, and our techniques are likely to apply, with
appropriate modifications, in each case.

\Cref{tab:comp} summarizes the comparison between the unsigned and
signed list-coloring landscapes for planar graphs.

\begin{table}[h!]
\centering
\renewcommand{\arraystretch}{1.25}
\begin{tabular}{|p{4.7cm}|p{4.6cm}|p{4.6cm}|}
\hline
\textbf{Setting} & \textbf{Unsigned bound} & \textbf{Signed bound (this paper)}\\
\hline
Planar, list & $5$~\cite{Thomassen1994} & $5$ (\cref{thm:JKS}; cf.~\cite{JinKangSteffen2016})\\
\hline
Planar, ordinary & $4$~\cite{AppelHaken1977} & $\le 5$, can equal $5$~\cite{KardosNarboni2021}\\
\hline
Planar, girth $\ge 5$, list & $3$~\cite{Thomassen1995} & $\le 4$ (\cref{prop:girth5})\\
\hline
$\Delta\le 4$ planar, list & $5$ (degeneracy) & $5$ (\cref{cor:subcubic})\\
\hline
Defective list, planar & $4$, $1$-def.~\cite{EatonHull1999} & $4$, $1$-def.\ (\cref{prop:defective})\\
\hline
Outerplanar, list & $3$~\cite{ChartrandGeller1969,ERT1979} & $3$ (\cref{cor:outerplanar})\\
\hline
\end{tabular}
\caption{Comparison of list-chromatic bounds for planar graphs in
the unsigned and signed settings. Our contributions appear in the
right column.}
\label{tab:comp}
\end{table}

\section{Open problems and concluding remarks}\label{sec:open}

\Cref{thm:main} suggests several natural further questions. We
collect them here in order of immediacy.

\begin{problem}[High girth refinements]\label{prob:girth5}
Determine the signed list chromatic number of signed planar graphs
of girth $\ge 5$. \Cref{prop:girth5} gives an upper bound of $4$.
Is the truth $3$?
\end{problem}

\begin{conjecture}\label{conj:girth4}
Every signed planar graph of girth at least $4$ is signed
$4$-choosable.
\end{conjecture}

Turning from girth restrictions to surface restrictions, one can
ask for analogues of \cref{thm:JKS} on surfaces of positive Euler
genus.

\begin{problem}[Higher-genus]
Does every signed graph $\Gs$ embeddable in the projective plane
satisfy $\chs\Gs\le 6$? In the torus, $\le 7$? The unsigned
analogues are known~\cite{ChoiKierstead2019,DeVosKawarabayashiMohar2008,BohmeMS1999,Skrekovski2002}.
\end{problem}

A different sort of refinement strengthens the algorithmic content
of \cref{thm:JKS} from offline to online list coloring.

\begin{problem}[Online/paintability version]
Develop an online (paintability) version of~\cref{thm:JKS}. The
online list chromatic number of an unsigned planar graph is
$5$~\cite{Schauz2009}; we conjecture the same for signed planar
graphs.
\end{problem}

A still stronger refinement replaces lists by the more general
correspondence assignments of DP-coloring.

\begin{conjecture}[Signed DP-$5$-choosability]\label{conj:DP}
Every planar signed graph admits a signed DP-coloring with $5$
admissible options at each vertex (in the sense
of~\cite{JinKangSteffen2018}).
\end{conjecture}

Finally, in the spirit of fine-tuning the result by structural
hypotheses on the signature itself, we ask:

\begin{problem}[Antibalanced refinements]
Refine~\cref{thm:JKS} for \emph{antibalanced} signed graphs (every
even cycle positive, every odd cycle negative). Is there a clean
structural condition under which a signed planar graph is
$4$-choosable?
\end{problem}

We end with some general comments on the proof method. The
argument given in \cref{sec:proof} preserves the conceptual
elegance of Thomassen's original argument~\cite{Thomassen1994} by
absorbing the signature entirely into the bookkeeping: the
involution $\alpha\mapsto\sigma(uv)\,\alpha$ converts forbidden
colors at one endpoint of an edge into forbidden colors at the
other, exactly as in the unsigned identity, and switching invariance
(\cref{lem:switch}) is then a clean consequence of the same
involution applied vertex-by-vertex. We view this as evidence that,
for chromatic-extremal problems on signed graphs, the right strategy
is to combine \emph{Zaslavsky-style structural analysis} (switching,
balance) with \emph{Thomassen-style precoloring-extension arguments}.
When this combination is set up correctly, the resulting proofs
are typically as short as in the unsigned case, suggesting that the
signed setting may be more amenable to systematic adaptation than
its abundance of competing definitions might at first suggest.

\section*{Acknowledgments}

The author gratefully acknowledges the influence of Carsten
Thomassen's seminal paper~\cite{Thomassen1994}, whose method this
note generalizes; of L. Jin, Y. Kang, and E. Steffen, who first
proved \cref{thm:JKS}; and of Thomas Zaslavsky, whose foundational
work on signed graphs makes such generalizations possible.


\end{document}